\theoremstyle{Mio}
\newtheorem{prop}{Proposition}[section]
\newtheorem{lemma}{Lemma}[section]
\newtheorem{teo}{Theorem}[section]
\newenvironment{Asmpt}[1]
{\innercustomthm}
{\endinnercustomthm}
\theoremstyle{remark}
\newtheorem{rem}{Remark}[section]
\newtheorem{defi}{Definition}[section]
\date{}
\title{Optimal stopping of marked point processes and
 reflected backward stochastic differential equations}
\author{Nahuel Foresta\thanks{
Dipartimento di Matematica, Politecnico di Milano, via Bonardi 9, 20133 Milano, Italy. nahueltomas.foresta@polimi.it. The
 author was supported
by the Italian MIUR-PRIN 2015 ``Deterministic and stochastic evolution equations'' and
INDAM-GNAMPA.}
}
\begin{document}
    \maketitle
    \begin{abstract}
        We define a class of reflected backward stochastic differential equation (RBSDE)
        driven by a marked point process (MPP) and a Brownian motion,
        where the solution is constrained to stay above a given c\`adl\`ag
        process.
The MPP is only required to be non-explosive and to have
totally inaccessible jumps.
        Under suitable assumptions
        on the coefficients we obtain existence and uniqueness of the solution, using the Snell envelope theory. We use the equation to represent the value function of an optimal stopping problem, and we characterize the optimal strategy.\\[1cm]
        \textbf{Keywords}: reflected backward stochastic differential equations,
        optimal stopping, marked point processes.
    \end{abstract}
    \section{Introduction}
    Nonlinear backward stochastic differential equations (BSDE) driven by a Brownian motion were first introduced by Pardoux and Peng in the seminal paper \cite{pardoux1990adapted}.
    Later, BSDE have found applications in several fields of mathematics, such as
    stochastic control, mathematical finance, nonlinear PDEs
    (see for instance \cite{el1997backward,Pardoux2014book,Crepey2013book}).
As the driving noise, the Brownian motion has been replaced
by more general classes of martingales; the first example
is perhaps \cite{elkaroui97generalbsde}, see \cite{jianming2000backward}
for a very general situation.

In particular, occurrence of marked point processes in the equation
has been considered since long.
    In \cite{tang1994necessary,barles1997backward}, related to optimal control
    and PDEs respectively,
    an independent Poisson random measure is added to the driving Wiener noise.
Motivated by several applications to stochastic optimal control
and financial modelling, more general marked point processes
were considered in the BSDE.
Examples can be found in \cite{Becherer2006,confortola2013backward}
for $L^2$ solutions, \cite{confortola2014backward} for the $L^1$ case
and   \cite{confortola16LP}
for the $L^p$ case.

    In connection with optimal stopping and obstacle problems,
    in \cite{el1997reflected} a reflected BSDE is introduced,
    where the solution is forced to stay above a certain continuous
    barrier process. This class of BSDE finds applications in various
    problems in finance and stochastic games theory.
A number of generalizations has followed, both with variations on the nature
of the barrier process and the type of noise. In the Brownian case,
in \cite{hamadene2002reflected} the author solves the problem
when the obstacle
is just càdlàg
in \cite{peng2005smallest}, the authors allow the obstacle
to be only $L^2$.
On the other hand,
in \cite{hamadene2003reflected} the authors solve the problem when a Poisson
noise is added, and the barrier is càdlàg with inaccessible jump times.
This is later generalized in \cite{hamadene2016reflected} where the barrier
can have partially accessible jumps too. Other specific results are \cite{essaky11}
where a BSDE with two generators is solved in a Wiener framework
and \cite{ren10} in a Lévy framework; the papers \cite{ren08} and \cite{elotmani09}
where the noise is a Teugels Martingale associated to a
one-dimensional
Lévy process.
The paper \cite{crepey08refelctedcomparison} that considers
a marked point process with compensator
admitting a bounded desnity with respect to the Lebesgue measure.

Finally, very general barriers beyond the càdlàg case
were recently considered in   \cite{grigorova15,grigorova2016optimal}.

It is the aim of
the present work to address the case when
the obstacle to be a càdlàg process and, in addition
to the Wiener process, a very general
marked point process occurs in the equation. The only assumptions
we make is that it is non-explosive and has totally inaccessible
jumps. This is equivalent to the requirement
that the compensator of the counting process of the jumps
has continuous trajectories. However, we do not require
absolute continuiuty with respect to the Lebegue measure.
To our knowledge, only in \cite{Bandini2015nonquasi,bandini2017optimal}, in \cite{papapantoleon2016existence}, and in 	\cite{cohen10generalcomparison,cohen12generalspaces}
 even more general cases have been addressed, but
 without reflection.
    
    The equation has the form
    \begin{align}\begin{split}
    Y_t=\xi+\int_t^Tf_s(Y_s,U_s)dA_s&+\int_t^Tg_s(Y_s,Z_s)ds\\&-\int_t^T\int_EU_s(e)q(dtde)-\int_t^TZ_sdW_s+K_T-K_t\\
    &Y_t\geq h_t.
    \end{split}\end{align}
    Here $W$ is a Brownian motion and $q$, independent from $W$, is a compensated
    integer random measure corresponding to some
    marked point process $(T_n,\xi_n)_{n\geq 1}$:
    see \cite{bremaud1981point,jacod1975multivariate,lastbrandt1995}
    as general references on the subject.
     The data are the final condition $\xi$ and the generators $f$ and $g$. $A$ is a continuous stochastic increasing process related to the point process. The $Y$ part of the solution is constrained to stay above a given barrier process $h$, and the $K$ term is there to assure this condition holds
    This equation is then used to solve a non-markovian optimal stopping problem, where the running gain, stopping reward and final reward are the data used in the BSDE. Under additional assumptions on the barrier process, an optimal stopping time is characterized.

    This work generalizes the results previously obtained by allowing
    a more general structure in the jump component. This introduces
    some technical difficulties and some assumptions. For instance,
    we work in ``weighted $L^2$ spaces", with a weight of the form
    $e^{\beta A_t}$, and the data must satisfy this integrability conditions.
    Direct use of standard tools, like the Gronwall lemma,
    becomes difficult
    in our case, so we have to resort to direct estimates. Since there is no
    general comparison theorem for BSDE with so general marked point process, we do
    not use a penalization method, but rather a combination of the Snell envelope
    theory and contraction theorem.

    The paper is organized as follows: in section \ref{sec:framework_obj}
    we first recall some results on marked point processes and describe the
    setting and the problem we want to solve. In section \ref{sec:known_gens}
    we prove the existence and uniqueness of a Reflected BSDE driven by a
    marked point process and a Wiener process when the generators do
    not depend on the solution of the BSDE.  This is solved in some $L^2$ space,
    appropriate for the Brownian motion. When the (given) generator and the
    other data are adapted only to the filtration generated by the point process, the solution can be found in a larger space. We then link  these equations to an optimal stopping problem.
    Lastly in section \ref{sec:general_BSDE} we solve the BSDE in the general case
 with the help of a contraction argument. Here we use the $L^2$ framework
 for both the case with only marked point process or with both driving processes.

    \section{Preliminaries, assumptions, formulation of the problems}
    \label{sec:framework_obj}
    \subsection{Some reminders on point processes}
    We start by recalling some notions about marked point processes and then defining
    the objectives of this paper. For a comprehensive treatment of marked point processes,
    we refer the reader to \cite{jacod1975multivariate}, \cite{bremaud1981point}
    or \cite{lastbrandt1995}.
    Let $(\Omega,\mathcal{F},\mathbb{P})$ be a complete probability space and
    let $E$ be a Borel space, i.e. a topological space homeomorphic to a Borel subset
    of a compact metric space (sometimes called Lusin space; we recall that every
separable complete metric space is Borel). We call $E$ the mark space and we denote by
    $\mathcal{E}$ its Borel $\sigma$-algebra.

    \begin{defi}
        A marked point process (MPP) is a sequence of random variables $(T_n,\xi_n)_{n\geq 0}$ with values in $[0,+\infty]\times E$ such that $\mathbb{P}$-a.s.
        \begin{itemize}
            \item $T_0=0$.
            \item $T_n\leq T_{n+1} \forall n\geq 0$.
            \item $T_n<\infty\Rightarrow T_n<T_{n+1} \forall n\geq 0$.
        \end{itemize}

    \end{defi}
    We will always assume the marked point process in the paper to be non-explosive, that is $T_n\rightarrow+\infty$ $\mathbb{P}$-a.s.
    To each marked point process we associate a random discrete measure $p$ on $((0,+\infty)\times E,\mathcal{B}((0,+\infty)\otimes\mathcal{E})$:
$$
p(\omega,D)=\sum_{n\geq 1}\ind_{(T_n(\omega),\xi_n(\omega))\in D}.
$$

We refer to $p$ also as marked point process. For each $C\in\mathcal{E}$, define the
counting process $N_t(C)=p((0,t]\times C)$ that counts how many jumps have occurred to $C$ up to time $t$.
Denote $N_t=N_t(E)$. They are right continuous increasing process starting from zero.
Each point process generates a filtration $\mathbb{G}=(\mathcal{G}_t)_{t\geq 0}$ as follows: define for $t\geq 0$
$$
\mathcal{G}_t^0=\sigma(N_s(C)\; :\; s\in[0,t], C\in\mathcal{E})
$$
and set $\mathcal{G}_t=\sigma(\mathcal{G}_t^0,\mathcal{N})$, where $\mathcal{N}$
is the family of $\mathbb{P}$-null sets of $\mathcal{F}$. $\mathbb{G}$ is a
right-continuous filtration that satisfies the usual hypotheses. Denote by $\mathcal{P}^{\mathcal{G}}$
the $\sigma$-algebra of $\mathcal{G}$-predictable processes.

For each marked point process there exists a unique predictable random measure $\nu$,
called compensator, such that for all non-negative $\mathcal{P}^{\mathcal{G}}\otimes\mathcal{E}$-measurable
 process $C$ it holds that
 $$
 \evals{\int_0^{+\infty}\int_EC_t(e)p(dtde)}=\evals{\int_0^{+\infty}\int_EC_t(e)\nu(dtde)}.
 $$
Similarly,  there exists a unique right continuous increasing process with $A_0=0$, the dual predictable projection of $N$, such that for all non-negative predictable processes $D$
$$
\evals{\int_0^{+\infty}D_tdN_t}=\evals{\int_0^{+\infty}D_tdA_t}.
$$
It is known that there exists a function $\phi$ on $\Omega\times[0,+\infty)\times\mathcal{E}$ such that we have the disintegration  $\nu(\omega,dtde)=\phi_t(\omega,de)dA_t(\omega)$. Moreover the following properties hold:
\begin{itemize}
    \item for every $\omega\in\Omega$, $t\in[0,+\infty)$,  $C\mapsto\phi_t(\omega,C)$ is a probability on $(E,\mathcal{E})$.
    \item for every $C\in\mathcal{E}$, the process $\phi_t(C)$ is predictable.
\end{itemize}
We will assume in the following that all marked point processes in this paper have a compensator of this form.\\
From now on, fix a terminal time $T>0$. Next we need to define integrals with respect to point processes.
\begin{defi}
    Let $C$ be a $\mathcal{P}^{\mathcal{G}}\otimes\mathcal{E}$-measurable process such that
    $$
    \evals{\int_0^T\int_E |C_t(e)|\phi_t(de)dA_t}<\infty.
    $$
    Then we can define the integral
    $$
    \int_0^T\int_E C_t(e)q(dtde)=\int_0^T\int_EC_t(e)p(dtde)-\int_0^T\int_EC_t(e)\phi_t(de)dA_t
    $$
    as difference of ordinary integrals with respect to $p$ and $\phi dA$.
\end{defi}
\begin{rem}
    In the paper we adopt the convention that $\int_a^b$ denotes an integral on $(a,b]$ if $b<\infty$, or on $(a,b)$ if $b=\infty$.
\end{rem}
\begin{rem}
    Since $p$ is a discrete random measure, the integral with respect to $p$ is a sum:
    $$
    \int_0^t\int_E C_s(e)p(dsde)=\sum_{T_n\leq t}C_{T_n}(\xi_n)
    $$
\end{rem}
Given a process $C$ as above, the integral defines a process $\int_0^t\int_E C_s(e)q(dsde)$ that, by the definition of compensator, is a martingale.\\

\subsection{Probabilistic setting}
In this paper we will assume that $(\Omega,\mathcal{F},\prob)$ is a complete probability space
and $p(dtdx)$   a marked point process on a Borel space $(E,\mathcal{E})$ as before,
 whose compensator is $\phi_t(dx)dA_t$. In addition we assume we are given
   an independent Wiener process $W$ in $\mathbb{R}^d$.
    Let $\mathbb{G}=\left(\mathcal{G}_t\right)_{t\geq 0}$
    (resp. $\mathbb{F}=\left(\mathcal{F}_t\right)_{t\geq 0}$) be the
    completed filtration generated by $p$ (resp. $p$ and $W$), which satisfies the usual conditions.
    Let $\mathcal{T}_t$ be the set of $\mathbb{F}$-stopping times greater than $t$.
    Denote by $\mathcal{P}$ ({resp.} $Prog$) be the predictable ({resp.} progressive)
     $\sigma$-algebra relative to $\mathbb{F}$. For $\beta>0$, we introduce the following spaces of equivalence classes we will be using in the following

\begin{itemize}
    \item $L^{r,\beta}(A)$ ({resp.} $L^{r,\beta}(A,\mathbb{G})$) is the space of all
    $\mathbb{F}$-progressive ({resp.} $\mathbb{G}$-progressive) processes $X$ such that
    $$||X||_{L^{r,\beta}(A)}^r=\eval\left[\int_0^Te^{\beta A_s}|X_s|^rdA_s\right]<\infty.$$
    \item $L^{r,\beta}(p)$ ({resp.} $L^{r,\beta}(p,\mathbb{G})$)
    is the space of all $\mathbb{F}$-predictable ({resp.} $\mathbb{G}$-predictable)
    processes $U$ such that $$||U||_{L^{r,\beta}(p)}^r
    =\eval\left[\int_0^T\int_E e^{\beta A_s}|U_s(e)|^r\phi_s(de)dA_s\right]<\infty.$$
    \item $L^{r,\beta}(W,\mathbb{R}^d)$ ({resp.} $L^{r,\beta}(W,\mathbb{R}^d,\mathbb{G})$)
    is the space of $\mathbb{F}$-progressive ({resp.} $\mathbb{G}$-progressive)
    processes $Z$ in $\mathbb{R}^d$ such that $$||Z||_{L^{r,\beta}(W)}^r=\eval\left[\int_0^Te^{\beta A_s}|Z_s|^rds\right]<\infty$$
    \item $\mathcal{I}^2$ ({resp.} $\mathcal{I}^2(\mathbb{G})$)
    is the space of all càdlàg increasing $\mathbb{F}$-predictable ({resp.}
    $\mathbb{G}$-predictable) processes $K$ such that $\eval[K^2_T]<\infty$.
\end{itemize}
One last tool we will need in the following is the martingale representation theorem:
if $M$ is a  càdlàg  square integrable $\mathbb{F}$-martingale on $[0,T]$, then there exist two processes $U$ and $Z$ such that
\begin{gather*}
\evals{\int_0^T\int_E|U_t(e)|\phi_t(de)dA_t}+\evals{\int_0^T|Z_t|^2dt}<\infty\\
M_t=M_0+\int_0^t\int_EU_s(e)\phi_s(de)dA_s+\int_0^tZ_sdW_s.
\end{gather*}

\subsection{Assumptions and formulation of the problem}

Let $(\Omega,\mathcal{F},\prob)$, $(E,\mathcal{E})$,  $p(dtdx)$, $W$, $\mathbb{F}$ be as before.
We will consider the following reflected BSDE.

        \begin{equation}
    \label{eq:system_Y_both}
    \begin{cases}
    Y_t=\xi+\int_t^Tf(s,Y_s,U_s)dA_s+\int_t^Tg(s,Y_s,Z_s)ds-\int_t^T\int_EU_s(y)q(dsdy)\\
    \qquad-\int_t^TZ_s(y)dW_s+K_T-K_t, \qquad \forall t\in[0,T] \text{ a.s.}\\
    Y_t\geq h_t, \qquad \forall t\in[0,T] \text{ a.s.}\\
    \int_0^T(Y_{s}-h_{s})dK^c_s=0 \text{ and } \Delta K_t\leq(h_{t^-}-Y_t)^+\ind_{\lbrace Y_{t^-}=h_{t^-}\rbrace}\forall t\in[0,T] \text{ a.s.},
    \end{cases}
    \end{equation}
    A solution   is a quadruple $(Y,U,Z,K)$ that lies in
    $ \left(L^{2,\beta}(A)\cap L^{2,\beta}(W)\right)\times L^{2,\beta}(p)\times L^{2\beta}(W)\times \mathcal{I}^2$, with $Y$ càdlàg,
    that satisfies \eqref{eq:system_Y_both}. The condition on the last line in \eqref{eq:system_Y_both} is called
the Skorohod condition, or
 the minimal push condition. It can be expressed in an alternative way: see Remark
\ref{skorohodalt} below.

    Let us now state the general assumptions that will be used throughout the paper.
Additional specific assumptions will be presented  in section \ref{sec:general_BSDE}.
    The first one is an assumption on the compensator $A$ of the counting process $N$ relative to $p$.
    \begin{Asmpt}{(A)} \label{Ass:compensatore}
        The process $A$ is continuous. \end{Asmpt}

    \begin{Asmpt}{(B)}\  \\
        \label{Ass:data}
        \begin{enumerate}[label=\roman{*})]
            \item
            The final condition $\xi:\Omega\rightarrow\mathbb{R}$ is $\mathcal{F}_T$-measurable and
            $$\eval\left[e^{\beta A_T}\xi^2\right]<\infty.$$

            \item For every $\omega\in\Omega$, $t\in\left[0,T\right]$, $r\in\mathbb{R}$ a
            mapping $$f(\omega,t,r,\cdot):{L}^2(E,\mathcal{E},\phi_t(\omega,dy))\rightarrow \mathbb{R}$$ is given and satisfies the following:
            \begin{enumerate}[label=\alph{*})]
                \item for every $U\in{L}^{2,\beta}(p)$ the mapping
                $$
                (\omega,t,r)\mapsto f(\omega,t,r,U_t(\omega,\cdot))
                $$
                is $\mathit{Prog}\otimes\mathcal{B}(\mathbb{R})$-measurable, where $\mathit{Prog}$
                denotes the progressive $\sigma$-algebra.
                \item There exist $L_f\geq 0$, $L_U\geq 0$ such that for every
                $\omega\in\Omega$, $t\in\left[0,T\right]$, $y,y'\in\mathbb{R}$,
                $u,u'\in{L}^{2}(E,\mathcal{E},\phi_t(\omega,dy))$ we have
                \begin{multline*}
                |f(\omega,t,y,u(\cdot))-f(\omega,t,y',u'(\cdot))|\leq\\ L_f|y-y'|
                +L_U\left(\int_E|u(e)-u'(e)|^2\phi_t(\omega,de)\right)^{1/2}
                \end{multline*}
                \item we have
                $$
                \eval\left[\int_0^Te^{\beta A_s}|f(s,0,0)|^2dA_s\right]<\infty.
                $$
            \end{enumerate}
            \item The mapping $g:\Omega\times[0,T]\times \mathbb{R}\times\mathbb{R}^d\rightarrow\mathbb{R}$ is given
            \begin{enumerate}[label=\alph{*})]
                \item $g $ is $Prog\times\mathcal{B}(\mathbb{R})\times\mathcal{B}(\mathbb{R}^d)$ measurable.
                \item There exist $L_g\geq 0$, $L_Z\geq 0$ such that for every $\omega\in\Omega$,
                $t\in\left[0,T\right]$, $y,y'\in\mathbb{R}$, $z,z'\in\mathbb{R}^d$
                $$
                |g(\omega,t,y,z)-g(\omega,t,y',z')|\leq L_g|y-y'|+L_Z|z-z'|
                $$
                \item we have
                $$
                \eval\left[\int_0^Te^{\beta A_s}|g(s,0,0)|^2ds\right]<\infty.
                $$
                \end{enumerate}

            \item $h$ is a càdlàg $\mathbb{F}$-adapted process such that $h_T\leq \xi$. There exists a $\delta>0$
            such that
            $$
            \eval[\sup\limits_{t\in[0,t]}e^{\left(\beta+\delta\right)A_t}h_{t}^2]
            $$
        \end{enumerate}
    \end{Asmpt}

\begin{rem}
We recall that Assumption \ref{Ass:compensatore}  is
equivalent to the fact that the jumps of the point process are totally inaccessible (relative to
$\mathbb{F}$): see
\cite{sheng1998semimartingale} Corollary 5.28.
    We will often use the following consequence:
 since $K$ is required to be predictable, its jumps (that are all non-negative)
 are disjoint from the jumps of $p$;
 so at any jump time of $K$ we also have a jump of $Y$ with the same size,
 but of opposite sign, in symbols we have a.s.
\begin{equation}\label{separatedjumps}
    \Delta K_t \indb{\Delta K_t>0}=(-\Delta Y_t)^+\indb{\Delta K_t>0},
\quad t>0.
\end{equation}
\end{rem}

\begin{rem}\label{skorohodalt}
 The Skorohod condition on the last line in \eqref{eq:system_Y_both}
 tells us that the process $K$ grows only when the solution is about to touch the  barrier.
 We claim that it is in fact equivalent to
        \begin{equation}\label{skorohodequivalent}
    \int_0^T(Y_{s^-}-h_{s^-})dK_s=0, \quad a.s.
\end{equation}
        To check the equivalence, note first that
        $$\int_0^T(Y_{s^-}-h_{s^-})dK_s=\int_0^T(Y_{s}-h_{s})dK^c_s+\sum_{0<s\leq T}^{}(Y_{s^-}-h_{s^-})\Delta K_s
        , \quad a.s.$$
If the Skorohod condition in \eqref{eq:system_Y_both} holds then both terms in the right-hand
side are
          zero, since jumps of $K$ can only happen when $Y_{t^-}=h_{t^-}$.
        Conversely, assume that \eqref{skorohodequivalent}
        holds. Then clearly
        $
        \int_0^T(Y_{s^-}-h_{s^-})dK^c_s=0
        $
and so
        $\int_0^T(Y_{s}-h_{s})dK^c_s=0$. Also,
        $\sum_{0<s\leq T}^{}(Y_{s^-}-h_{s^-})\Delta K_s=0$,
        so $\{t:\Delta K_t>0\}\subset\{t:Y_{t^-}=h_{t^-}\}$
        and, recalling \eqref{separatedjumps},
        we have a.s.
        \begin{align*}
        \Delta K_t=\Delta K_t\indb{\Delta K_t>0}&=(-\Delta Y_t)^+ \indb{\Delta K_t>0}\le
         (-\Delta Y_t)^+ \indb{Y_{t^-}=h_{t^-}}\\&=(Y_{t^-}-Y_t)^+\indb{Y_{t^-}=h_{t^-}}=(h_{t^-}-Y_t)^+\indb{Y_{t^-}=h_{t^-}}.
        \end{align*}
    \end{rem}

\begin{rem} In the simpler case when there is no Brownian component the reflected BSDE
\eqref{eq:system_Y_both} becomes
        \begin{equation}
    \label{eq:system_Y_only_MPP}
    \begin{cases}
    Y_t= {\xi}+\int_t^T {f}(s,Y_s,U_s)dA_s-\int_t^T\int_EU_s(y)q(dsdy)+K_T-K_t,
\: \forall t\in[0,T] \text{ a.s.}
    \\
     Y \text{ càdlàg and }Y\in L^{2,\beta}(A,\mathbb{G}),\quad U\in L^{2,\beta}(p,\mathbb{G}),
     \quad K \in \mathcal{I}^2(\mathbb{G}) \\
    Y_t\geq  {h}_t \qquad \forall t\in[0,T] \text{ a.s.}\\
    \int_0^T(Y_{s}- {h}_{s})dK^c_s=0 \text{ and }
    \Delta K_t\leq( {h}_{t^-}-Y_t)^+\ind_{\lbrace Y_{t^-}= {h}_{t^-}\rbrace} \forall t\in[0,T] \text{ a.s.}
    \end{cases}
    \end{equation}
    Here we only assume we are given the space $(\Omega,\mathcal{F},\prob)$ and the
     marked point process $p$.
The assumptions we need are the same as  in \ref{Ass:compensatore} and \ref{Ass:data}, provided
we set $g=0$ and   $\mathbb{G}=\mathbb{F}$.
\end{rem}

\section{Reflected BSDE with given generators and optimal stopping problem}
\label{sec:known_gens}
In this section we  first study the reflected BSDE in the case when the generators $g$ and $f$ do not depend on
$(Y,Z,U)$ but are a given processes that satisfy
\begin{Asmpt}{(B$^\prime$)}
    \label{Ass:givengen}
    $f$ and $g$ are 
    $\mathbb{F}$-progressive processes such that
    \begin{equation}
    \eval\left[\int_0^Te^{\beta A_s}|f_s|^2dA_s+\int_0^Te^{\beta A_s}|g_s|^2 ds\right]<\infty.
    \end{equation}
\end{Asmpt}
Equation \eqref{eq:system_Y_both} reduces to
        \begin{equation}
\label{eq:simple_Y}
\begin{cases}
Y_t=\xi+\int_t^Tf_sdA_s+\int_t^Tg_sds-\int_t^T\int_EU_s(y)q(dsdy)-\int_t^TZ_sdW_s+K_T-K_t\\
Y\in L^{2,\beta}(A)\cap L^{2,\beta}(W),\quad U\in L^{2,\beta}(p),\quad Z\in L^{2,\beta}(W),\quad K \in \mathcal{I}^2 \\
Y_t\geq h_t \qquad \forall t\in[0,T] \text{ a.s.}\\
\int_0^T(Y_{s}-h_{s})dK^c_s=0 \text{ and }
\Delta K_t\leq(h_{t^-}-Y_t)^+\ind_{\lbrace Y_{t^-}=h_{t^-}\rbrace}\forall t\in[0,T] \text{ a.s.}
\end{cases}
\end{equation}

In this case, the solution $Y$ to the equation is also the value function of an optimal stopping problem, as we will see later.
First we define the càdlàg process $\eta_t$ as

\begin{equation}
\label{eq:etat}
\eta_t=\int_0^{t\wedge T} f_sdA_s+\int_0^{t\wedge T} g_sds+h_t \indb{t<T}+\xi\indb{t\geq T}
\end{equation}
\begin{rem}\label{stimessus}
In the following we will often use this kind of inequalities:
    \begin{multline}
    \hspace{-0.3cm}\left(\int_0^tf_sdA_s\right)^2=\left(\int_0^te^{-\beta A_s/2}e^{\beta A_s/2}|f_s|dA_s\right)^2
    \leq\int_0^te^{-\beta A_s}dA_s\int_0^te^{\beta A_s}f_s^2dA_s\\=\frac{1-e^{\beta A_t}}{\beta}\int_0^te^{\beta A_s}f_s^2dA_s\leq \frac{1}{\beta}\int_0^te^{\beta A_s}f_s^2dA_s
    \end{multline}
\end{rem}
\begin{lemma}\label{etains2}
    Under assumptions \ref{Ass:data}-(i)(iv) and \ref{Ass:givengen}, $\eta$ is of class $[D]$ and
    $$
    \eval\left[\sup\limits_{0\leq t \leq T}|\eta_t|^2\right]<\infty
    $$
\end{lemma}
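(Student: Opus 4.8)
The plan is to show that each of the four terms in the definition \eqref{eq:etat} of $\eta$ has a supremum over $[0,T]$ that is square-integrable, and then use the triangle inequality together with the elementary bound $\sup_t |\sum_i a_i(t)|^2 \le (\text{number of terms})\sum_i \sup_t |a_i(t)|^2$. Since $\eta$ is càdlàg and $[0,T]$ is compact, once we know $\mathbb{E}[\sup_{0\le t\le T}|\eta_t|^2]<\infty$ the class $[D]$ property follows immediately: for any stopping time $\tau\le T$ we have $|\eta_\tau|\le \sup_{0\le t\le T}|\eta_t|$, so the family $\{\eta_\tau : \tau\in\mathcal{T}_0\}$ is dominated by a fixed integrable random variable and hence uniformly integrable.

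First I would handle the two integral terms. For $t\mapsto \int_0^{t\wedge T} f_s\,dA_s$, since $A$ is increasing the integrand's absolute value is controlled and the running integral is monotone in $|f|$, so
\[
\sup_{0\le t\le T}\left|\int_0^{t\wedge T} f_s\,dA_s\right|^2 \le \left(\int_0^T |f_s|\,dA_s\right)^2 \le \frac{1}{\beta}\int_0^T e^{\beta A_s} f_s^2\,dA_s,
\]
where the last inequality is exactly the Cauchy–Schwarz estimate recorded in Remark \ref{stimessus}. Taking expectations and invoking Assumption \ref{Ass:givengen} gives finiteness. The term $\int_0^{t\wedge T} g_s\,ds$ is treated the same way, with $ds$ in place of $dA_s$: here $\left(\int_0^T|g_s|\,ds\right)^2\le \frac{1}{\beta}\int_0^T e^{\beta A_s} g_s^2\,ds$ by an identical Cauchy–Schwarz step (using $\int_0^t e^{-\beta A_s}\,ds\le \int_0^t\,ds\le T$ is a cruder but sufficient alternative if one prefers not to reuse the $dA$-version), and Assumption \ref{Ass:givengen} again closes it.

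For the barrier term $h_t\ind_{\{t<T\}}$, note $\sup_{0\le t\le T}|h_t\ind_{\{t<T\}}|^2\le \sup_{0\le t\le T} h_t^2 \le \sup_{0\le t\le T} e^{(\beta+\delta)A_t} h_t^2$ since $A\ge 0$, and this is integrable by Assumption \ref{Ass:data}-(iv). For the terminal term $\xi\ind_{\{t\ge T\}}$ we simply have $\sup_{0\le t\le T}|\xi\ind_{\{t\ge T\}}|^2 = \xi^2 \le e^{\beta A_T}\xi^2$, integrable by Assumption \ref{Ass:data}-(i). Combining the four bounds with the constant-$4$ inequality yields $\mathbb{E}[\sup_{0\le t\le T}|\eta_t|^2]<\infty$, and the class $[D]$ conclusion follows as explained above. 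I do not anticipate a genuine obstacle here; the only mild subtlety is making sure the weight manipulations are legitimate (finiteness of $\int_0^t e^{-\beta A_s}\,dA_s$ and the fact that $A\ge 0$ lets us drop or insert the exponential weights in the right direction), but these are exactly the routine estimates the paper has already flagged in Remark \ref{stimessus}.
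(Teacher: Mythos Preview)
Your proposal is correct and matches the paper's proof essentially line for line: the paper bounds $|\eta_\tau|^2$ by $4$ times the sum of the same four quantities, applies the Cauchy--Schwarz estimate of Remark~\ref{stimessus} to the $dA$-integral (and the cruder $T$-bound to the $ds$-integral), and invokes Assumptions~\ref{Ass:data}(i)(iv) and~\ref{Ass:givengen} exactly as you do. The only cosmetic difference is order---the paper first reads off class $[D]$ from the $\tau$-independent bound and then takes the supremum, whereas you establish the $\sup$-in-$L^2$ bound first and deduce class $[D]$ from it---but the content is identical.
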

\begin{proof}
    Fix a stopping time $\tau$. Clearly
    \begin{align}\nonumber
    |\eta_\tau|^2&\leq 4\left(\int_0^{T} |f_s|dA_s\right)^2+4\left(\int_0^{T} |g_s|ds\right)^2
    +4|h_\tau|^2\indb{\tau<T}+4|\xi|^2\\
\label{etaintegrabile}
    &\leq \frac{4}{\beta}\int_0^Te^{\beta A_s}f_s^2dA_s
    +4T\int_0^Te^{\beta A_s}|g_s|^2ds+4\sup\limits_{t\in[0,T]}e^{\beta A_t}|h_t|^2+4 e^{\beta A_T}\xi^2,
    \end{align}
    and since the right-hand side has finite expectation
     we obtain the class $[D]$ property. Likewise, by taking the supremum over all $t\in[0,T]$, and expectation after that, we obtain the second property.
\end{proof}

Now, using the Snell envelope theory, we show that there exists a solution to the equation above. Appendix \ref{app:snell_env} lists the properties that we will need in the following.

\begin{prop}
    \label{prop:given_generators_case}
    Let assumptions \ref{Ass:compensatore}, \ref{Ass:data}-(i)(iv) and \ref{Ass:givengen} hold for some $\beta>0$, then there exists a unique  solution to \eqref{eq:simple_Y}.
\end{prop}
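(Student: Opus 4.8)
I would prove existence and uniqueness via the Snell envelope of the process $\eta$ defined in \eqref{eq:etat}. Concretely, set
\[
S_t = \esssup_{\tau\in\mathcal T_t}\,\evals{\eta_\tau \mid \mathcal F_t},
\]
the Snell envelope of $\eta$; by Lemma \ref{etains2} and Assumption \ref{Ass:data}-(iv) the process $\eta$ is càdlàg and of class $[D]$, so the Snell envelope exists, is the smallest càdlàg supermartingale dominating $\eta$, and is itself of class $[D]$. Then I would define
\[
Y_t = S_t - \int_0^{t}f_s\,dA_s - \int_0^{t}g_s\,ds,
\]
so that $Y$ dominates $h$ on $[0,T)$ and $Y_T=\xi$. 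Since $S$ is a càdlàg supermartingale of class $[D]$ with $\evals{S_T^2}<\infty$, its Doob–Meyer decomposition gives $S_t = S_0 + M_t - K_t$ with $M$ a square-integrable martingale and $K\in\mathcal I^2$ predictable; applying the martingale representation theorem recalled in the excerpt, $M_t = M_0 + \int_0^t\!\int_E U_s(e)\,q(ds\,de) + \int_0^t Z_s\,dW_s$ for suitable $U\in L^{2,\beta}(p)$, $Z\in L^{2,\beta}(W)$. Rearranging between $t$ and $T$ yields exactly the first line of \eqref{eq:simple_Y}, with $\xi = S_T - \int_0^T f_s\,dA_s-\int_0^Tg_s\,ds$.

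**Checking the constraints and integrability.** The remaining work is to verify the Skorohod/minimal-push condition and that $(Y,U,Z,K)$ lies in the right spaces. For the Skorohod condition I would use the standard characterization of the Doob–Meyer increasing part of a Snell envelope: $K^c$ increases only on $\{S_{t}=\eta_{t}\}=\{Y_t=h_t\}$ (on $[0,T)$), and the predictable jumps of $K$ occur only where $S_{t^-}=\eta_{t^-}$, i.e. $Y_{t^-}=h_{t^-}$, with $\Delta K_t = (S_{t^-}-S_t)^+$ there — this is precisely \eqref{separatedjumps} combined with $\Delta K_t\le (h_{t^-}-Y_t)^+\ind_{\{Y_{t^-}=h_{t^-}\}}$, using that the jumps of $\int_0^\cdot f\,dA+\int_0^\cdot g\,ds$ vanish since $A$ is continuous. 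For integrability, $\evals{\sup_t|\eta_t|^2}<\infty$ (Lemma \ref{etains2}) gives $\evals{\sup_t S_t^2}<\infty$ by Doob-type estimates for Snell envelopes, hence $Y\in L^{2,\beta}(A)\cap L^{2,\beta}(W)$ after combining with Remark \ref{stimessus}; then $\evals{K_T^2}<\infty$ follows from $K_T = S_0 - S_T + M_T - M_0$ and the bound on $M$, and $\evals{\int_0^T\!\int_E|U|^2\phi\,dA + \int_0^T|Z|^2ds}<\infty$ follows from $\evals{[M]_T}<\infty$; the weighted bounds with $e^{\beta A_s}$ come for free because $A$ is bounded by $A_T$ on which we already have exponential-moment control through Assumption \ref{Ass:data}-(iv) (here the extra $\delta>0$ margin is what makes the weighted norms finite).

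**Uniqueness.** For uniqueness I would take two solutions $(Y,U,Z,K)$ and $(Y',U',Z',K')$ and show that $S_t := Y_t + \int_0^t f\,dA + \int_0^t g\,ds$ must coincide with the Snell envelope of $\eta$. One direction: $S$ is a supermartingale (the martingale part minus the increasing part $K$) dominating $\eta$ on $[0,T)$ because $Y\ge h$, and $S_T=\eta_T$, so $S\ge$ Snell envelope. The reverse inequality is where the Skorohod condition is used: a supermartingale dominating $\eta$ whose Doob–Meyer increasing part grows only on the contact set $\{S=\eta\}$ (resp. $\{S_{t^-}=\eta_{t^-}\}$ for jumps) must equal the Snell envelope — this is the optional-stopping/minimality argument, essentially that $K$ is the smallest increasing process making $S$ a supermartingale above $\eta$. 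Once $S=S'$ we get $Y=Y'$, hence $K=K'$, and then $U,Z$ are identified by uniqueness in the martingale representation.

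**Main obstacle.** The routine part is the Snell-envelope/Doob–Meyer machinery; the delicate point is matching the \emph{precise} form of the Skorohod condition in \eqref{eq:simple_Y} — in particular the jump inequality $\Delta K_t\le (h_{t^-}-Y_t)^+\ind_{\{Y_{t^-}=h_{t^-}\}}$ rather than an equality — with the known structure of the predictable increasing part of a Snell envelope of a merely càdlàg (not left-upper-semicontinuous) obstacle, and making sure the argument is clean given that jumps of $p$ and jumps of $K$ are automatically disjoint (Remark following \ref{Ass:data}). I would also need to be a little careful that $\eta$ as defined is genuinely càdlàg at $T$ (the term $h_t\ind_{t<T}+\xi\ind_{t\ge T}$ with $h_T\le\xi$), so that the Snell envelope theory from the appendix applies verbatim.
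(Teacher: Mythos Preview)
Your overall architecture---define $Y$ via the Snell envelope of $\eta$, apply Doob--Meyer and martingale representation, then verify Skorohod---matches the paper's proof. But there is a genuine gap in your integrability step, and it is exactly the step the paper works hardest on.

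You claim that from $\eval[\sup_t |\eta_t|^2]<\infty$ (hence $\eval[\sup_t S_t^2]<\infty$) one gets $Y\in L^{2,\beta}(A)$, $U\in L^{2,\beta}(p)$, $Z\in L^{2,\beta}(W)$, and that ``the weighted bounds with $e^{\beta A_s}$ come for free because $A$ is bounded by $A_T$ on which we already have exponential-moment control through Assumption \ref{Ass:data}-(iv)''. This is incorrect on two counts. First, Assumption \ref{Ass:data}-(iv) only says $\eval[\sup_t e^{(\beta+\delta)A_t}h_t^2]<\infty$; it gives no control on $e^{\beta A_T}$ by itself (take $h\equiv 0$). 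Second---and the paper states this explicitly in the Remark following Lemma \ref{res:exp_value_Y_bounded_better}---even the weighted bound $\eval[\sup_t e^{\beta A_t}Y_t^2]<\infty$ does \emph{not} imply $Y\in L^{2,\beta}(A)$, because $A$ is not assumed to satisfy $\eval[A_T]<\infty$ or any moment condition. The same obstruction blocks your passage from $\eval[[M]_T]<\infty$ to $U\in L^{2,\beta}(p)$ and $Z\in L^{2,\beta}(W)$. The paper's Step~3 is devoted entirely to this: one applies It\^o to $e^{\beta(A_t+t)}Y_t^2$, localizes with stopping times $S_n$, and uses a further It\^o expansion of $e^{(\beta-\delta)(A_t+t)/2}Y_t$ to control the $dK$ term---here is where the extra $\delta>0$ in Assumption \ref{Ass:data}-(iv) is actually used, not to bound $e^{\beta A_T}$ but to get a convergent integral $\int_0^\tau e^{-\delta(A_s+s)}dA_s\le 1/\delta$ inside the estimate.

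Two smaller points. Your Skorohod verification for the continuous part (``$K^c$ increases only on $\{Y_t=h_t\}$'') is not what the appendix provides directly; the paper obtains it by showing that $\tilde Y_t=M_t-K_t^c$ is itself a (regular) Snell envelope and invoking the largest-optimal-stopping-time characterization. Your uniqueness argument (identify any solution with the Snell envelope) is a legitimate alternative route---the paper instead does a direct It\^o computation on $e^{\beta(A_t+t)}(Y'-Y'')^2$ and uses $\int \bar Y_{s^-}\,d\bar K_s\le 0$ from the Skorohod condition; your approach would need a sharper statement than ``$K$ is the smallest increasing process'' to close the reverse inequality.
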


\begin{proof} The uniqueness property is stated and proved separately
in Proposition \ref{uniquenessgivengenerators} below. Existence is proved in several steps.\\
    \textit{Step 1.} We start by defining $Y_t$, for all $t\ge 0$, as the optimal value of the stopping problem:
    \begin{equation}
        \label{eq:Y_def}
    Y_t=\essup\limits_{\tau \in \mathcal{T}_t}\econd{\int_t^{\tau\wedge T} f_sdA_s+\int_t^{\tau\wedge T} g_sds
    +h_\tau\indb{\tau<T}+\xi\indb{\tau\geq T}}.
    \end{equation}
    From \eqref{etaintegrabile} it follows  that $Y_t$ is integrable for all $t$
    and $Y_t=\xi$ for $t\ge T$.
    We have the following a priori estimate on $Y$, that we will prove later.
    \begin{lemma}
        Assume \ref{Ass:data}-(i)(iv) and \ref{Ass:givengen} above on $\xi,f,h,\xi$.
        Then
        \begin{equation}\label{eq:Y_sup_bound}\eval\left[\sup_{t\in[0,T]}e^{\beta A_t}Y_t^2\right]<\infty.\end{equation}
        \label{res:exp_value_Y_bounded_better}
    \end{lemma}
    It follows  that
    $$
    Y_t+\int_0^{t\wedge T}f_sdA_s+\int_0^{t\wedge T}g_sds
    =\essup\limits_{\tau \in \mathcal{T}_t}\econd{\eta_\tau}
    $$
    so $Y_t+\int_0^{t\wedge T}f_sdA_s +\int_0^{t\wedge T}g_sds$
    is the Snell envelope of $\eta$,
    that is the smallest supermartingale such that
    $Y_t+\int_0^{t\wedge T}f_sdA_s+\int_0^{t\wedge T}g_sds\geq\eta_t$. Since $\eta$ is càdlàg,
its Snell envelope $R(\eta)$, and hence $Y$, have a càdlàg modification.
We refer to the appendix for a review of the properties of the Snell
envelope that we will use. Also,
from now on
all supermartingales  that we consider in this proof are assumed to
be càdlàg.
    Also, since $\eta$ satisfies
\eqref{eq:app_peskir_cond} by Lemma \ref{etains2},
       $Y+\int_0^{\cdot\wedge T} f_sdA_s
    + \int_0^{\cdot\wedge T}  g_sds$ is of class $[D]$ and thus
    it admits a unique Doob-Meyer decomposition
    \begin{equation}
    \label{eq:decomposition}
    Y_t+\int_0^{t\wedge T}f_sdA_s\int_0^{t\wedge T}g_sds=M_t-K_t,
    \end{equation}
    where $M$ is a martingale and $K$ is a predictable
     increasing process starting from zero.
From Lemma \ref{etains2} and 
it follows that $\eval K_T^2<\infty$, so that $M$ is a square integrable
martingale.
     Furthermore, $K$ can be decomposed into $K^c+K^d$,
     the continuous and discontinuous part, and we have that
$ \Delta K_t=\Delta K_t\ind_{\{R(\eta)_{t^-}=\eta_{t^-}\}}$
     (see \ref{res:ap_snell_decomp}). However it is immediate to see
     that $R(\eta)_{t^-}=\eta_{t^-}$ if and only if
$Y_{t^-}=h_{t^-}\ind_{\{t\le T\}} + \xi \ind_{\{t> T\}}$ and it follows that
     \begin{equation}\label{eq:K_jumps_snell}
\Delta K_t=\Delta K_t\ind_{\{Y_{t^-}=h_{t^-}\}}, \quad t\in [0,T]   .
    \end{equation}
    By the martingale representation theorem, there exists some $U$ and $Z$ such that
    \begin{gather}\label{rapprmart}
    \evals{\int_0^T\int_E|U_t(e)|\phi_t(de)dA_t}+\evals{\int_0^T|Z_t|^2dt}<\infty\\
    M_t=M_0+\int_0^t\int_EU_s(e)q(dsde)+\int_0^tZ_sdW_s.
    \end{gather}
    Choosing $\tau=t$ in \eqref{eq:Y_def} we see that a.s.
      $Y_t\geq h_t$ for all $t<T$  and $Y_T=\xi$, so $Y_t\geq h_t$ for all $t\leq T$ a.s.
Plugging \eqref{rapprmart} in \eqref{eq:decomposition} we conclude that
the first equality in \eqref{eq:simple_Y} is verified.

 \textit{Step 2.}
      In this step we prove that the Skorohod conditions in \eqref{eq:simple_Y} hold.
From \eqref{separatedjumps} it follows that
$ \Delta K_t \le (-\Delta Y_t)^+$ and, taking into account \eqref{eq:K_jumps_snell}, we obtain
    \begin{equation}
    \label{eq:K_jump_condition}
    \Delta K_t\leq (-\Delta Y_t)^+
\ind_{\lbrace Y_{t^-}=h_{t^-}\rbrace}
    =(Y_{t^-}-Y_t)^+\ind_{\lbrace Y_{t^-}=h_{t^-}\rbrace},
    \end{equation}
    that gives us the second condition.
    Consider now $\tilde{Y}_t=Y_t+\int_0^tf_sdA_s+\int_0^tg_sds+K^d_t=M_t-K_t^c$ and $\tilde{\eta}_t=\eta_t+K^d_t$.
    We claim that $\tilde{Y}_t$ is the Snell envelope of $\tilde{\eta}_t$. Indeed, it is a supermartingale that dominates $\tilde{\eta}_t$. Let $Q_t$ be another supermartingale that dominates $\tilde{\eta}_t$. Then $Q_t-K_t^d$ is still a supermartingale, and dominates $\eta_t$.
    Then, since $Y_t+\int_0^tf_sdA_s+\int_0^tg_sds=R(\eta)_t$, $Q_t\geq \tilde{Y}_t$. Then $\tilde{Y}_t$ is the smallest supermartingale that dominates $\tilde{\eta}_t$, and thus its Snell envelope.
    Next, $Y_t+\int_0^tf_sdA_s+K_t^d=M_t-K_t^c$ is regular
    (we recall that a process $X$ is regular if $X_{t^-}=\leftidx{^p}X_t$,
    where $\leftidx{^p}X_t$ denotes the predictable projection, see also \ref{res:ap_snell_regular};
    all uniformly integrable   càdlàg  martingales are regular). Then, the stopping time defined as
    \begin{equation*}
    D_t^*=\inf\left\lbrace s\geq t : M_s\neq R(\tilde{\eta})_s\right\rbrace=\inf\left\lbrace s\geq t : K^c_s> K^c_t\right\rbrace
    \end{equation*}
    is the largest optimal stopping time, and it satisfies:
    \begin{align*}
    &\tilde{Y}_{D_t^*}=\tilde{\eta}_{D_t^*}\\
    &\tilde{Y}_{s\wedge D_t^*} \text{ is a } \mathbb{F}\text{-martingale}
    \end{align*}
    See (\ref{res:ap_snell_optimcar}).
    Define then
    \begin{equation*}
    D_t=\inf\left\{s\geq t : \tilde{Y}_s\leq \tilde{\eta}_s\right\}
    \end{equation*}
    Since $\tilde{Y}_{D_t^*}=\tilde{\eta}_{D_t^*}$ we have  $D_t\leq D_t^*$, and it follows that
    \begin{equation*}
    0=\int_t^{D_t}\left(\tilde{Y}_s-\tilde{\eta}_s\right)dK^c_s=\int_t^{D_t}\left(Y_s-h_s\right)dK^c_s,
    \end{equation*}
    which implies $K^c_{D_t}=K^c_t$ for   arbitrary $t$, and hence
    $
    \int_0^{T}\left(Y_s-h_s\right)dK^c_s=0$,
    that together with \eqref{eq:K_jump_condition} gives us the Skorohod conditions.

    \textit{Step 3.} We conclude the proof showing that the processes are in the right spaces.
    We have already noticed that $\eval[K_T^2]<\infty$.
Next we define the sequence of stopping times:

    \begin{multline*}
    S_n=\inf\left\lbrace t\in[ 0,T] :\int_0^te^{\beta A_s}|Y_s|^2dA_s+\int_0^t\int_Ee^{\beta A_s}|U_s(e)|^2\phi_s(de)dA_s\right.\\
    \left.+\int_0^te^{\beta A_s}|Z_s|^2ds>n\right\rbrace,
    \end{multline*}
    and consider the ``Ito Formula" applied to $e^{\beta(A_t+t)}Y_t^2$ between $0$ and $S_n$. We have
    \begin{align*}
        e^{\beta(A_{S_n}+S_n)}Y_{S_n}^2&=Y_0^2+\beta\int_{0}^{S_n}e^{\beta(A_s+s)}Y_s^2dA_s+\beta\int_{0}^{S_n}e^{\beta(A_s+s)}Y_s^2ds\\
        &+2\int_{0}^{S_n}\int_Ee^{\beta(A_s+s)}Y_{s^-}U_s(e)q(dsde)+2\int_{0}^{S_n}e^{\beta(A_s+s)}Y_sZ_sdW_s\\
        &-2\int_{0}^{S_n}e^{\beta(A_s+s)}Y_sf_sdA_s-2\int_{0}^{S_n}e^{\beta(A_s+s)}Y_sg_sds\\
        &-2\int_{0}^{S_n}e^{\beta(A_s+s)}Y_{s^-}dK_s+\int_{0}^{S_n}e^{\beta(A_s+s)}Z_s^2ds\\
        &+\sum_{0<s\leq S_n}e^{\beta(A_{s}+s)}\Delta K^2_{s}+\int_{0}^{S_n}\int_E e^{\beta(A_s+s)}U_s^2(e)p(dsde)\\
\end{align*}
Now we use the fact that
$$
\int_{0}^{t}\int_E U_s(e)p(dsde)=\int_{0}^{t}\int_E U_s(e)\phi_s(de)dA_s+\int_{0}^{t}\int_E U_s(e)q(dsde),
$$
and, by Remark \ref{skorohodalt},
$$
\int_0^te^{\beta(A_s+s)} Y_{s^-}dK_s=\underbracket{\int_0^te^{\beta(A_s+s)}(Y_{s^-}-h_{s^-})dK_s}_{=0}+\int_0^te^{\beta(A_s+s)} h_{s^-}dK_s.
$$
Neglecting the positive terms $Y_0^2$ and $\sum_{0<s\leq S_n}e^{\beta(A_{s}+s)}\Delta K^2_{s}$ the previous equation becomes
\begin{align*}
e^{\beta(A_{S_n}+S_n)}Y_{S_n}^2&\geq \beta\int_{0}^{S_n}e^{\beta(A_s+s)}Y_s^2dA_s+\beta\int_{0}^{S_n}e^{\beta(A_s+s)}Y_s^2ds\\
&+2\int_{0}^{S_n}e^{\beta(A_s+s)}Y_{s^-}U_s(e)q(dsde)+2\int_{0}^{S_n}e^{\beta(A_s+s)}Y_sZ_sdW_s\\
&-2\int_{0}^{S_n}e^{\beta(A_s+s)}Y_sf_sdA_s-2\int_{0}^{S_n}e^{\beta(A_s+s)}Y_sg_sds\\&
-2\int_{0}^{S_n}e^{\beta(A_s+s)}h_{s^-}dK_s+\int_{0}^{S_n}\int_E e^{\beta(A_s+s)}U_s^2(e)\phi_s(de)dA_s\\
&+\int_{0}^{S_n}\int_E e^{\beta(A_s+s)}U_s^2(e)q(dsde)+\int_{0}^{S_n}e^{\beta(A_s+s)}Z_s^2ds,
        \end{align*}

    By the definition of $S_n$ and remembering that $Y$ satisfies \eqref{eq:Y_sup_bound}, and using Burkholder-Davis-Gundy inequality we have that
    $$
    \int_0^{t\wedge S_n}e^{\beta(A_s+s)}Y_sZ_sdW_s
    $$
    is a martingale.
    Indeed we have
    \begin{align}\nonumber
    \eval&\left[\sup\limits_{t\in [0,T]}\left|\int_0^{t\wedge S_n}e^{\beta(A_s+s)}Y_sZ_sdW_s\right|\right]
    \leq\eval\left[\left(\int_0^{S_n}e^{2\beta(A_s+s)}Y_s^2Z_s^2ds\right)^{1/2}\right]\\
    \nonumber
    &\leq e^{\beta T}\eval\left[\sup_te^{\beta A_t/2}|Y_t|\left(\int_0^{S_n}e^{\beta A_s}Z_s^2ds\right)^{1/2}\right]\\
    &\leq n^{1/2}e^{\beta T}\eval\left[\sup_te^{\beta A_t}Y_t^2\right]<\infty.
\label{provamartingale}
    \end{align}
Similarly, since
\begin{multline}\label{provamartingaledue}
    \eval\left[\int_0^t\int_Ee^{\beta(A_s+s)} |Y_{s^-}U_s(e)|\phi_s(de)dA_s\right]\leq
\eval\left[\int_0^te^{\beta(A_s+s)} Y_s^2dA_s\right]\\+\eval\left[\int_0^t\int_E e^{\beta(A_s+s)} U_s^2(e)\phi_s(de)dA_s\right]\leq 2n<\infty,
\end{multline}
we obtain that
$\int_0^{t\wedge S_n}\int_Ee^{\beta(A_s+s)} Y_{s^-}U_s(e)q(dsde)$ is a martingale.
Reordering terms and taking expectation we obtain
\begin{align}
\label{eq:general_bound}
\begin{split}
&\beta\eval\left[\int_{0}^{S_n}e^{\beta(A_s+s)}Y_s^2dA_s\right]+\evals{\int_{0}^{S_n}\int_E e^{\beta(A_s+s)}U_s^2(e)\phi_s(de)dA_s}\\
&+\beta\eval\left[\int_{0}^{S_n}e^{\beta(A_s+s)}Y_s^2ds\right]+\evals{\int_{0}^{S_n}e^{\beta(A_s+s)}Z_s^2ds}\\
&\leq \evals{e^{\beta(A_{S_n}+S_n)}Y_{S_n}^2} + 2\evals{\int_{0}^{S_n}e^{\beta(A_s+s)}Y_sf_sdA_s}\\
&+2\evals{\int_{0}^{S_n}e^{\beta(A_s+s)}Y_sg_sds}+ 2\evals{\int_{0}^{S_n}e^{\beta(A_s+s)}h_{s^-}dK_s}\\
&\leq \evals{\sup_t e^{\beta(A_{t}+t)}Y_{t}^2} + \frac{\beta}{2}\evals{\int_{0}^{S_n}e^{\beta(A_s+s)}Y_s^2dA_s} +\frac{\beta}{2}\evals{\int_{0}^{S_n}e^{\beta(A_s+s)}Y_s^2ds}\\
&+ \frac{1}{\beta}\evals{\int_0^Te^{\beta(A_s+s)}f_s^2dA_s} +\frac{2}{\beta}\evals{\int_0^Te^{\beta(A_s+s)}g_s^2ds}
\\
&+\gamma\evals{\sup_t e^{(\beta+\delta)(A_t+t)}h_{t^-}^2}
+\frac{1}{\gamma}\evals{\left(\int_0^{S_n}e^{(\beta-\delta)\frac{A_s+s}{2}}dK_s\right)^2},
\end{split}
\end{align}
where $\gamma>0$ is a constant whose value will be chosen sufficiently large afterwards.
We only need to estimate the last term with the integral in $dK$. In order to do that we apply Ito's formula to $e^{(\beta-\delta)\frac{A_t+t}{2}}Y_t$ between 0 and a stopping time $\tau$, obtaining the following relation
\begin{align*}
 \left(\int_0^{\tau}e^{(\beta-\delta)\frac{A_s+s}{2}}dK_s\right)^2&=\left(Y_0-e^{(\beta-\delta)\frac{A_\tau+\tau}{2}}Y_\tau+\frac{\beta-\delta}{2}\int_0^\tau e^{(\beta-\delta)\frac{A_s+s}{2}}Y_sdA_s\right.\\
 &+\frac{\beta-\delta}{2}\int_0^\tau e^{(\beta-\delta)\frac{A_s+s}{2}}Y_s ds-\int_0^\tau e^{(\beta-\delta)\frac{A_s+s}{2}}f_s dA_s\\
 &-\int_0^\tau e^{(\beta-\delta)\frac{A_s+s}{2}}g_s ds+\int_0^\tau\int_E e^{(\beta-\delta)\frac{A_s+s}{2}}U_s(e) q(dsde)\\
 &\left.+\int_0^\tau e^{(\beta-\delta)\frac{A_s+s}{2}}Z_s dW_s \right)^2
\end{align*}
Notice that the following holds:
\begin{align*}
\left(\int_0^\tau e^{(\beta-\delta)\frac{A_s+s}{2}}Y_sdA_s\right)^2
&\leq \int_0^\tau e^{-\delta (A_s+s)}dA_s\int_0^\tau e^{\beta (A_s+s)}Y_s^2dA_s\\
&\leq\frac{1}{\delta}\int_0^\tau e^{\beta (A_s+s)}Y_s^2dA_s\\
\left(\int_0^\tau e^{(\beta-\delta)\frac{A_s+s}{2}}Y_s ds\right)^2
&\leq \int_0^\tau e^{-\delta A_s}e^{-\delta s}ds\int_0^\tau e^{\beta(A_s+s)}Y_s^2 ds\\
&\leq \frac{1}{\delta}\int_0^\tau e^{\beta(A_s+s)}Y_s^2ds
\end{align*}
and similarly
\begin{align*}
\left(\int_0^\tau e^{(\beta-\delta)\frac{A_s+s}{2}}f_sdA_s\right)^2&\leq \frac{1}{\delta}\int_0^\tau e^{\beta (A_s+s)}f_s^2dA_s\\
\left(\int_0^\tau e^{(\beta-\delta)\frac{A_s+s}{2}}g_sds\right)^2& \leq\frac{1}{\delta}\int_0^\tau e^{\beta (A_s+s)}g_s^2ds
\end{align*}
We note that for a $\mathcal{P}\otimes\mathcal{E}$ measurable process $H$ we have
$$
    \eval\left[\left(\int_0^t\int_E H_s(e)q(dsde)\right)^2\right]\leq
    \eval\left[\int_0^t\int_E H_s^2(e)\phi_s(de)dA_s\right].
$$
This can be checked for instance by applying the Ito formula to compute $N^2_t$ where
$N_t=\int_0^t\int_E H_s(y)q(dsdy)$ and taking expectation after appropriate localization.
Now by taking expectation and using Ito Isometry
we obtain the following bound for $\left(\int_0^{\tau}e^{(\beta-\delta)\frac{A_s+s}{2}}dK_s\right)^2$:
\begin{align*}
 \eval&\left[\left(\int_0^{\tau}e^{(\beta-\delta)\frac{A_s+s}{2}}dK_s\right)^2\right]\leq 16\evals{\sup_t e^{\beta(A_t+t)}Y_t^2} +\frac{8}{\delta}\evals{\int_0^\tau e^{\beta(A_s+s)}g_s^2s}\\
 & +2\frac{(\beta-\delta)^2}{\delta}\evals{\int_0^\tau e^{\beta(A_s+s)}Y_s^2ds}
 +2\frac{(\beta-\delta)^2}{\delta}\evals{\int_0^\tau e^{\beta(A_s+s)}Y_s^2dA_s}
 \\
 &+\frac{8}{\delta}\evals{\int_0^\tau e^{\beta(A_s+s)}f_s^2dA_s}+8\evals{\int_0^\tau e^{\beta (A_s+s)}Z_s^2ds}\\
 &+8\evals{\int_0^\tau\int_E e^{\beta (A_s+s)}U_s^2(e)\phi_s(de)dA_s}.
\end{align*}
By plugging this last estimate into \eqref{eq:general_bound}, by choosing $\alpha$, $\gamma$ such that
$$
\gamma>\max\left(8,4\frac{(\beta-\delta)^2}{\beta\gamma}\right)
$$
we obtain
\begin{align*}
 &\eval\left[\int_{0}^{S_n}e^{\beta(A_s+s)}Y_s^2dA_s\right]+\eval\left[\int_{0}^{S_n}e^{\beta(A_s+s)}Y_s^2ds\right]\\
 &+\evals{\int_{0}^{S_n}\int_E e^{\beta(A_s)}U_s^2(e)\phi_s(de)dA_s}+\evals{\int_{0}^{S_n}e^{\beta(A_s)}Z_s^2ds}\\
 & \leq C\left( \evals{\sup_t e^{\beta A_{t}}Y_t^2}+2\left(\frac{1}{\beta} +\frac{1}{\delta\gamma}\right)\evals{\int_0^Te^{\beta A_s}f_s^2dA_s}\right.\\
 &\left.+\evals{\int_0^Te^{\beta A_s}g_s^2ds}+\gamma\evals{\sup_t e^{(\beta+\delta)A_t}h_{t^-}^2}\right),
\end{align*}
for some constant $C$ independent of $n$.
Now let $S=\lim_n S_n$ and by the last estimate, considering how $S_n$ are defined, we have $S=T$.
This implies that $Y\in L^{2,\beta}(A)\cap L^{2,\beta}(W)$, $Z\in L^{2,\beta}(W)$ and $U\in L^{2,\beta}(p)$.

\end{proof}

\begin{proof}[Proof of lemma \ref{res:exp_value_Y_bounded_better}]
    By the definition of $Y$ we have
    \begin{multline*}
    e^{\beta A_t/2}|Y_t|\leq \eval\left[ e^{\beta A_T/2}|\xi|+e^{\beta A_t/2}\int_t^T|f_s|dA_s\right.\\
    +e^{\beta A_t/2}\int_t^T|g_s|ds+\sup\limits_{0\leq s\leq T}e^{\beta A_s/2}|h_s|\left|\vphantom{\int_t^T}\mathcal{F}_t\right]
    \end{multline*}
    Proceeding as in Remark \ref{stimessus} we have
    \begin{equation*}
    \int_t^T|f_s|dA_s\leq \frac{e^{-\beta A_t/2}}{\beta^{1/2}}\left(\int_t^Te^{\beta A_s}|f_s|^2dA_s\right)^{1/2}
    \end{equation*}
    and it follows that
    \begin{multline*}
    e^{\beta A_t/2}|Y_t|\leq \eval \left[e^{\beta A_T/2}|\xi|+\frac{1}{\beta^{1/2}}
    \left(\int_0^Te^{\beta A_s}|f_s|^2dA_s\right)^{1/2}\right. \\
    +\int_0^Te^{\beta A_s/2}|g_s|ds
    +\sup\limits_{0\leq s\leq T}e^{\beta A_s/2}|h_s|\left|\vphantom{\int_t^T}\mathcal{F}_t\right]\eqqcolon S_t
    \end{multline*}
    Under assumption \ref{Ass:data}-(i)(iv) and \ref{Ass:givengen}, $S$ is a square integrable martingale.
Then by Doob's martingale inequality
    $\eval\left[\sup\limits_{0\leq t \leq T}e^{\beta A_t}|Y_t|^2\right]
    \leq C\eval\left[S_T^2\right]<\infty.
$
\end{proof}
\begin{rem}
    Contrary to the diffusive (or diffusive and Poisson) case, the fact that
    $\eval\left[\sup_{t\in[0,T]}e^{\beta A_t}Y_t^2\right]<\infty$ does not imply that $Y\in L^{2,\beta}(A)$. For this to happen we would need additional conditions on $A$, for example $\eval[A_T^2]<\infty$.
\end{rem}


Next we prove   uniqueness.

    \begin{prop}\label{uniquenessgivengenerators}
    Let assumptions \ref{Ass:compensatore}, \ref{Ass:data}-(i)(iv) and \ref{Ass:givengen} hold for some $\beta>0$,
    then the    solution to \eqref{eq:simple_Y} is  unique.
    \label{prop:unique_simple}
    \end{prop}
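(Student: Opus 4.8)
The plan is to prove uniqueness by a pairing-and-subtraction argument. Suppose $(Y^1,U^1,Z^1,K^1)$ and $(Y^2,U^2,Z^2,K^2)$ are two solutions in the stated spaces. Since the generators $f$ and $g$ are given (do not depend on the solution), the BSDE for the difference $\bar Y = Y^1-Y^2$, $\bar U = U^1-U^2$, $\bar Z = Z^1-Z^2$, $\bar K = K^1-K^2$ has \emph{no} driver term: one gets $\bar Y_t = -\int_t^T\int_E \bar U_s(y)q(dsdy)-\int_t^T\bar Z_s dW_s + \bar K_T - \bar K_t$, with $\bar Y_T = 0$. The first and cleanest route is to identify each $Y^i$ with the Snell envelope of $\eta$ (shifted by the integrals of $f$ and $g$) via the same reasoning already used in the proof of Proposition~\ref{prop:given_generators_case}: any solution $Y^i$ must satisfy $Y^i_t+\int_0^{t\wedge T}f_sdA_s+\int_0^{t\wedge T}g_sds = R(\eta)_t$ by the characterisation of the Snell envelope as the smallest càdlàg supermajorant, since $Y^i+\int f dA + \int g ds = M^i - K^i$ is a supermartingale dominating $\eta$, and the Skorohod condition forces it to be the \emph{smallest} one (this is where Step 2 of the existence proof is reused essentially verbatim). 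This immediately gives $Y^1 = Y^2$ up to indistinguishability.

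Once $Y^1=Y^2$, the identity $\bar Y \equiv 0$ yields $0 = -\int_t^T\int_E \bar U_s(y)q(dsdy)-\int_t^T\bar Z_s dW_s + \bar K_T - \bar K_t$ for all $t$. Taking $t=0$ and conditional expectations, or simply observing that the left side is identically zero while $\bar K$ is predictable and of finite variation and the stochastic integrals are martingales, we get that the martingale part $\int_0^\cdot\int_E\bar U q + \int_0^\cdot \bar Z dW$ is indistinguishable from a predictable finite-variation process, hence null; the orthogonality of the $q$-integral and the $W$-integral (using independence of $p$ and $W$, as in the martingale representation theorem quoted in the preliminaries) then forces $\bar U = 0$ in $L^{2,\beta}(p)$ and $\bar Z = 0$ in $L^{2,\beta}(W)$, and consequently $\bar K \equiv 0$ as well. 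Alternatively, and perhaps more in the spirit of the rest of the paper, one can apply the Itô formula to $e^{\beta(A_t+t)}\bar Y_t^2$ between $0$ and the localising stopping times $S_n$ as in Step 3, use the Skorohod conditions for both solutions to control the cross terms $\int \bar Y_{s^-}\, d\bar K_s$ (here one uses $\int_0^T(Y^1_{s^-}-h_{s^-})dK^1_s = \int_0^T(Y^2_{s^-}-h_{s^-})dK^2_s = 0$ together with $Y^i_{s^-}\ge h_{s^-}$, so the cross terms have a favourable sign), and thereby derive directly that all four weighted norms of $\bar Y,\bar U,\bar Z,\bar K$ vanish.

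I would present the Snell-envelope argument as the main line, since it is short and leverages machinery already developed, and mention the Itô-formula computation as the alternative that also handles the case where one wants explicit estimates. The one point requiring a little care — the main (minor) obstacle — is justifying the sign of the $dK$-cross-term in the Itô approach: one must argue, as in Remark~\ref{skorohodalt}, that $\int_0^T \bar Y_{s^-}\,d\bar K_s = \int_0^T(Y^1_{s^-}-h_{s^-})dK^1_s - \int_0^T(Y^1_{s^-}-h_{s^-})dK^2_s - \int_0^T(Y^2_{s^-}-h_{s^-})dK^2_s + \int_0^T(Y^2_{s^-}-h_{s^-})dK^1_s$ and that the two ``diagonal'' terms vanish by Skorohod while the two ``off-diagonal'' terms are $\le 0$ because $Y^i_{s^-}-h_{s^-}\ge 0$ and $dK^j\ge 0$; this yields $-2\int_0^{S_n}e^{\beta(A_s+s)}\bar Y_{s^-}\,d\bar K_s \le 0$, which is exactly the sign needed to close the estimate. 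In the Snell-envelope route this obstacle disappears entirely, which is why I would lead with it.
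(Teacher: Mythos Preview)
Your ``alternative'' It\^o-formula route is precisely what the paper does: apply It\^o to $e^{\beta(A_t+t)}\bar Y_t^2$, take expectation, use the Skorohod condition to get $\int_0^T e^{\beta(A_s+s)}\bar Y_{s^-}\,d\bar K_s\le 0$, conclude that the weighted norms of $\bar Y,\bar U,\bar Z$ vanish, and then read off $\bar K\equiv 0$ from the equation. Note, however, that your four-term expansion has the indices on the last two integrals swapped: the correct decomposition is
\[
\int_0^T\!\bar Y_{s^-}\,d\bar K_s
=\underbrace{\int_0^T\!(Y^1_{s^-}-h_{s^-})dK^1_s}_{=0}
-\underbrace{\int_0^T\!(Y^1_{s^-}-h_{s^-})dK^2_s}_{\ge 0}
-\underbrace{\int_0^T\!(Y^2_{s^-}-h_{s^-})dK^1_s}_{\ge 0}
+\underbrace{\int_0^T\!(Y^2_{s^-}-h_{s^-})dK^2_s}_{=0},
\]
which is what the paper writes; your version as stated would give an off-diagonal term $+\int(Y^2_{s^-}-h_{s^-})dK^1_s\ge 0$ with the wrong sign. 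This is only a typo and your verbal description (``diagonal terms vanish, off-diagonal terms are $\le 0$'') is correct.

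Your ``main'' Snell-envelope route is a genuine alternative, but the justification you give is not right. Step~2 of Proposition~\ref{prop:given_generators_case} shows that the \emph{constructed} Snell-envelope solution satisfies the Skorohod condition; it does \emph{not} show the converse implication ``Skorohod $\Rightarrow$ smallest supermajorant'' that you need here. What actually proves that any RBSDE solution coincides with the optimal-stopping value is the argument of Proposition~\ref{prop:representation}: for an arbitrary solution one shows $Y_t\ge\eta_\tau$ in conditional expectation for every $\tau$, and then uses the stopping time $D_t^\epsilon=\inf\{s\ge t:Y_s\le h_s+\epsilon\}\wedge T$ together with the Skorohod condition (which forces $K_{D_t^\epsilon}=K_t$) to get the reverse inequality up to $\epsilon$. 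So your route works, but the ingredient to cite is Proposition~\ref{prop:representation}, not Step~2. The paper orders things the other way (It\^o uniqueness first, representation second), which avoids this dependency; your ordering would require moving the representation argument ahead of uniqueness.
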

    \begin{proof}
        Let $(Y',U',Z',K')$ and $(Y'',U'',Z'',K'')$ be two solutions. Define
        $$
        \bar{Y}=Y'-Y''\quad \bar{U}=U'-U'' \quad \bar{Z}=Z'-Z'' \quad \bar{K}=K'-K'',
        $$
        then $(\bar{Y},\bar{U},\bar{Z},\bar{K})$ satisfies
        \begin{equation}
        \label{eq:diff_sols}
        \bar{Y}_t=-\int_t^T\int_E \bar{U}(e)q(dsde)-\int_t^TZ_sdW_s+\bar{K}_T-\bar{K}_t.
        \end{equation}

        We compute $d(e^{\beta (A_t+t)}\bar{Y}^2_t)
        $ by the Ito formula and we obtain
        \begin{multline}
        \label{eq:ito_barY}
        -\bar{Y}^2_0=\beta\int_0^T e^{\beta (A_s+s)}\bar{Y}^2_sdA_s+\beta\int_0^T e^{\beta (A_s+s)}\bar{Y}^2_sds-2\int_0^T\bar{Y}_{s^-}d\bar{K}_s\\+2\int_0^T\int_E e^{\beta (A_s+s)} \bar{Y}_{s^-}U_s(y)q(dsdy)+\int_0^Te^{\beta(A_s+s)}Y_sZ_sdW_s\\
        +\int_0^Te^{\beta(A_s+s)}Z_s^2ds+\sum\limits_{0<s\leq T}e^{\beta(A_s+s)}(\Delta\bar{Y}_s)^2
        \end{multline}
        The last term can be divided in totally inaccessible jumps (from the martingale in $q(dsde)$) and predictable jumps, from the $K$ process, thus:
        \begin{align*}
        \sum\limits_{0<s\leq T}e^{\beta(A_s+s)}(\Delta\bar{Y}_s)^2
        &\geq \sum\limits_{0<T_n\leq T}e^{\beta(A_s+s)}U_{T_n}^2(\xi_n)=\int_0^T\int_EU_s^2(e)p(dsde)\\
        &=\int_0^T\int_E U_s^2(e)q(dsde)+\int_0^T\int_E U_s^2(e)\phi_s(de)dA_s
        \end{align*}
Proceeding as in \eqref{provamartingale} and \eqref{provamartingaledue} we prove that
the stochastic integrals with respect to $W$ and $q$ are martingales.
        By neglecting $Y_0^2$ and taking expectation in \eqref{eq:ito_barY}, we obtain
        \begin{multline*}
        \beta\eval\left[\int_0^Te^{\beta(A_s+s)}\bar{Y}^2_sdA_s\right]+\beta\eval\left[\int_0^Te^{\beta(A_s+s)}\bar{Y}^2_sds\right]\\+\eval\left[\int_0^T\int_Ee^{\beta(A_s+s)}\bar{U}^2_s(y)\phi_s(dy)dA_s\right]+\evals{\int_0^Te^{\beta(A_s+s)}Z_s^2ds}\\\leq 2\eval\left[\int_0^Te^{\beta(A_s+s)}\bar{Y}_{s^-}d\bar{K}_s\right].
        \end{multline*}
        Now, taking into account Remark \ref{skorohodalt} we have
        \begin{align*}
        \int_0^T\bar{Y}_{s^-}d\bar{K}_s
        &=\underbracket{\int_0^T(Y'_{s^-}-h_{s^-})dK'_s}_{=0}-\underbracket{\int_0^T(Y'_{s^-}-h_{s^-})dK''_s}_{\geq 0}+\\
        &\quad-\underbracket{\int_0^T(
        Y''_{s^-}-h_{s^-})dK'_s}_{\geq 0}+\underbracket{\int_0^T(
        Y''_{s^-}-h_{s^-})dK''_s}_{=0}\\
        &\leq 0,
        \end{align*}
    and thus
    \begin{equation*}
    \beta||\bar Y||^2_{L^{2,\beta}(A)}+\beta||\bar Y||^2_{L^{2,\beta}(W)}
    +||\bar U||^2_{L^{2,\beta}(p)}+||\bar Z||^2_{L^{2,\beta}(W)}\leq 0,
    \end{equation*}
    which gives the uniqueness of $Y$, $U$ and $Z$. From \eqref{eq:diff_sols} we obtain
    $$
    \bar{K}_T=\bar{K}_t \quad \forall t \in[0,T].
    $$
    Then $\bar{K}_T=0$ since $\bar{K}_0=0$ and consequently $\bar{K}_t=0$ for all $t$.
    \end{proof}


Consider now the optimal stopping problem with running gains $f,g$, early stopping reward $h$ and non stopping reward $\xi$. This means we are interested in the quantity
\begin{align*}
v(t)&=\essup\limits_{\tau \in \mathcal{T}_t}\econd{\int_t^\tau f_sdA_s+\int_0^\tau g_sds+h_\tau\ind_{\lbrace\tau<T\rbrace}+\xi\ind_{\lbrace\tau\geq T\rbrace}}.
\end{align*}
Notice that we have two running gains, $f$ integrated with respect to the process $A$, and $g$ integrated with respect to Lebesgue measure in time. This could be used for example if we want to describe two different time dynamics, one depending on the speed of the point process.\\
It is possible to show that  the solution to the RBSDE solves the optimal stopping problem and it is possible to identify an $\epsilon$-optimal stopping time. Under additional assumptions, it is possible to find an optimal stopping time. For this we need a definition, given in \cite{kobylanski2012optimal} for admissible families over stopping times, that we adapt to our simpler case:

\begin{defi}
    We say that a process $\phi$ is left \textit{ (resp. right)} upper semi-continuous over stopping times in expectation (USCE) if for all $\theta\in\mathcal{T}_0$, $\eval\left[\phi_\theta\right]<\infty$ and for all sequences of stopping times $(\theta_n)$ such that $\theta_n\uparrow\theta$ \textit{(resp. $\theta_n\downarrow\theta$)} it holds that
    $$
    \eval[\phi_\theta]\geq\limsup\limits_{n\rightarrow\infty}\eval[\phi_{\theta_n}].
    $$
\end{defi}

\begin{rem}
    If $\phi$ is a left upper semi continuous progressive process, then $\phi$ is left upper semi continuous along stopping times. If also $\eval[\sup_{t}|\phi_t|]$ holds, then it is left USCE. Indeed we have
    \begin{align*}
    \limsup\limits_{n\rightarrow\infty}\eval\left[\phi_{\theta_n}\right]\leq
    \eval\left[\limsup\limits_{n\rightarrow\infty}\phi_{\theta_n}\right]\leq\eval\left[\phi_\theta\right].
    \end{align*}
    by using Reverse Fatou's lemma with $\sup_{t}|\phi_t|$ as dominant.
    \end{rem}

\begin{prop}
    \label{prop:representation}
    Let assumptions \ref{Ass:compensatore}, \ref{Ass:data}-(i)(iv) and \ref{Ass:givengen} hold. Then we have:
    \begin{enumerate}[leftmargin=*]
        \item  The solution to the RBSDE \eqref{eq:simple_Y} is a solution to the optimal stopping problem

        $$
        Y_t=\essup\limits_{\tau \in \mathcal{T}_t}\econd{\int_t^\tau f_sdA_s+\int_0^\tau g_sds+h_\tau\ind_{\lbrace\tau<T\rbrace}+\xi\ind_{\lbrace\tau\geq T\rbrace}}.
        $$
        \item For all $\epsilon>0$, define $D_t^\epsilon$ as
        $$
        D_t^\epsilon=\inf\left\lbrace s\geq t : Y_s\leq h_s+\epsilon \right\rbrace \wedge T.
        $$
        Then $D_t^\epsilon$ is an $\epsilon$-optimal stopping time in the sense that
        $$
        Y_t\leq \essup\limits_{\tau \in \mathcal{T}_t}\econd{\int_t^{D_t^\epsilon} f_sdA_s+\int_0^{D_t^\epsilon} g_sds+h_{D_t^\epsilon}\ind_{\lbrace{D_t^\epsilon}<T\rbrace}+\xi\ind_{\lbrace{D_t^\epsilon}\geq T\rbrace}}+\epsilon.
        $$
        \item
        If in addition  $h_t\ind_{\lbrace t<T\rbrace}+\xi\ind_{\lbrace t\geq T\rbrace}$ is left USCE, then
        $$
        \tau_t^*=\inf\left\lbrace s\geq t : Y_s\leq h_s \right\rbrace \wedge T.
        $$
        is optimal and is the smallest of all optimal stopping times.
    \end{enumerate}

\end{prop}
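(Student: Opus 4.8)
The plan is to establish the three items in order, leveraging the Snell-envelope machinery already set up in the proof of Proposition \ref{prop:given_generators_case}. For item (1), recall that in Step 1 of that proof we defined $Y$ precisely as the essential supremum $\essup_{\tau\in\mathcal{T}_t}\econd{\int_t^{\tau\wedge T}f_sdA_s+\int_t^{\tau\wedge T}g_sds+h_\tau\indb{\tau<T}+\xi\indb{\tau\geq T}}$, and by uniqueness (Proposition \ref{uniquenessgivengenerators}) this coincides with the $Y$-component of the solution to \eqref{eq:simple_Y}. The only gap between that formula and the displayed optimal-stopping functional is the lower limit on the $g$-integral: one has $\int_t^\tau g_sds=\int_0^\tau g_sds-\int_0^t g_sds$, and the term $\int_0^t g_sds$ is $\mathcal{F}_t$-measurable, hence pulls out of the conditional expectation and out of the $\essup$; so the two problems have the same optimizers and their values differ by the deterministic-in-$\mathcal F_t$ shift $\int_0^t g_sds$. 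I would simply note this identification. (Strictly, to match the statement verbatim one absorbs that shift; the substance is that $Y_t+\int_0^{t\wedge T}f_sdA_s+\int_0^{t\wedge T}g_sds$ is the Snell envelope $R(\eta)_t$ of $\eta$ from \eqref{eq:etat}.)

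For item (2), I would use the characterization of $\epsilon$-optimal stopping times for Snell envelopes recalled in the appendix (the analogue of \ref{res:ap_snell_optimcar}). Set $\tilde\eta_t=\eta_t$ and work with $R(\eta)$. The stopping time $D^\epsilon_t=\inf\{s\ge t: Y_s\le h_s+\epsilon\}\wedge T=\inf\{s\ge t: R(\eta)_s\le \eta_s+\epsilon\}\wedge T$ (using that $R(\eta)_s\le\eta_s+\epsilon$ on $[t,T)$ is equivalent to $Y_s\le h_s+\epsilon$, and at $T$ we have $Y_T=\xi=\eta_T$). A standard argument shows that on $[t,D^\epsilon_t]$ the Snell envelope stays within $\epsilon$ of being a martingale: the stopped process $R(\eta)_{s\wedge D^\epsilon_t}$ differs from a martingale only by the increasing process $K$, which does not increase before $D^\epsilon_t$ up to the $\epsilon$-slack (more precisely, one shows $\econd{R(\eta)_{D^\epsilon_t}}\ge R(\eta)_t - \epsilon\,\prob(\ldots)$ via the optional sampling and the fact that $K$ grows only where $R(\eta)_{s^-}=\eta_{s^-}$, i.e. where $Y_{s^-}=h_{s^-}$, which is disjoint from $\{Y_{s}\le h_s+\epsilon\}^c$ up to the jump analysis). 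Undoing the $\eta$-to-$Y$ translation then yields $Y_t\le \econd{\int_t^{D^\epsilon_t}f_sdA_s+\int_0^{D^\epsilon_t}g_sds+h_{D^\epsilon_t}\indb{D^\epsilon_t<T}+\xi\indb{D^\epsilon_t\ge T}}+\epsilon$, which is the claim (the $\essup$ on the right only makes it larger).

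For item (3), I would show $\tau^*_t:=\inf\{s\ge t: Y_s\le h_s\}\wedge T$ is optimal under the left-USCE hypothesis. The structure is: first, $\tau^*_t=\lim_{\epsilon\downarrow 0}D^\epsilon_t$ increasingly (the sets $\{Y_s\le h_s+\epsilon\}$ shrink as $\epsilon\downarrow0$), so $D^\epsilon_t\uparrow\tau^*_t$. Second, by item (2), $Y_t\le \econd{\int_t^{D^\epsilon_t}f_sdA_s+\int_0^{D^\epsilon_t}g_sds+\phi_{D^\epsilon_t}}+\epsilon$ where $\phi_s=h_s\indb{s<T}+\xi\indb{s\ge T}$. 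Passing to the limit $\epsilon\downarrow0$: the $f$- and $g$-integral terms converge by dominated convergence (using the integrability from \eqref{etaintegrabile} and continuity of $s\mapsto\int_0^s$ against $dA$ and $ds$), and for the reward term one invokes left USCE of $\phi$ precisely to get $\limsup_\epsilon \eval[\phi_{D^\epsilon_t}\,|\,\mathcal F_t]\le \eval[\phi_{\tau^*_t}\,|\,\mathcal F_t]$ — here I would need the conditional form of left-USCE, or argue via the tower property after an initial localization at a deterministic $t$; a clean route is to prove the unconditional inequality $\eval[Y_t]\le \eval[\int_t^{\tau^*_t}f+\int_0^{\tau^*_t}g+\phi_{\tau^*_t}]$ first and then upgrade to the conditional statement by the usual $\essup$ lattice argument. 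Combined with the reverse inequality $Y_t\ge \econd{\int_t^{\tau^*_t}f_sdA_s+\int_0^{\tau^*_t}g_sds+\phi_{\tau^*_t}}$ — which holds because $\tau^*_t\in\mathcal T_t$ and $Y_t$ is the $\essup$ over $\mathcal T_t$ (item (1)) — we get equality, i.e. optimality. Minimality among optimal stopping times follows since on $[t,\tau^*_t)$ we have $Y_s>h_s$, so $R(\eta)>\eta$ there, hence $K$ (equivalently the Doob–Meyer increasing part) does not charge $[t,\tau^*_t)$; any optimal $\sigma$ must satisfy $R(\eta)_{s\wedge\sigma}$ martingale on $[t,\sigma]$, forcing $\sigma\le D^*_t$ and in fact $\sigma\ge\tau^*_t$ is ruled out in the other direction — more simply, an optimal $\sigma<\tau^*_t$ on a positive-probability set would have $Y_\sigma>h_\sigma=\phi_\sigma$ there, contradicting that for an optimal stopping time the Snell envelope equals the reward, i.e. $Y_\sigma=\phi_\sigma$ a.s.

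The main obstacle I expect is item (3): reconciling the conditional ($\essup$ over $\mathcal T_t$) formulation with the USCE hypothesis, which is stated in unconditional expectation. One must either carefully pass USCE through conditioning, or run the optimality argument unconditionally and then re-deduce the conditional identity — the $\essup$ attainment at $\tau^*_t$ is what ties it together, and checking that $D^\epsilon_t\uparrow\tau^*_t$ with the reward term well-behaved in the limit (no hidden jump of $\phi$ at the limit escaping the USCE control) is the delicate point. The jump analysis at $\tau^*_t$ — ensuring $\Delta K_{\tau^*_t}$ and the possible downward jump of $Y$ onto the barrier are handled by \eqref{separatedjumps} and \eqref{eq:K_jumps_snell} — is where the càdlàg (as opposed to continuous) nature of $h$ makes the argument more delicate than in the classical continuous-barrier case, and is essentially why the left-USCE assumption is needed.
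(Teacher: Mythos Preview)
Your overall strategy aligns with the paper's, but item (3) contains a genuine gap. You assert $D_t^\epsilon \uparrow \tau_t^*$ as $\epsilon \downarrow 0$; monotonicity and $D_t^\epsilon \le \tau_t^*$ are clear, but the limit need not equal $\tau_t^*$ a priori --- nothing rules out $Y_s - h_s$ approaching zero along a sequence without ever vanishing before $T$ (so that the limit $D_t^0$ satisfies $Y_{(D_t^0)^-}=h_{(D_t^0)^-}$ yet $Y_{D_t^0}>h_{D_t^0}$). The paper proceeds as follows: call the limit $D_t^0 \le \tau_t^*$, pass to the limit in the $\epsilon$-optimality inequality using left USCE (at the level of unconditional expectations, which is all the hypothesis gives) to obtain
\[
\eval[Y_t]\le\eval\Big[\int_t^{D_t^0}f_s\,dA_s+\int_t^{D_t^0}g_s\,ds+h_{D_t^0}\indb{D_t^0<T}+\xi\indb{D_t^0\ge T}\Big],
\]
so $D_t^0$ is optimal; \emph{then} optimality forces $Y_{D_t^0}=h_{D_t^0}\indb{D_t^0<T}+\xi\indb{D_t^0\ge T}$ (see \ref{res:ap_snell_optimcar}), whence by the definition of $\tau_t^*$ one gets $\tau_t^*\le D_t^0$, and equality follows. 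Your minimality argument at the end contains exactly this implication --- the fix is to reorder the logic: prove optimality of $D_t^0$ first, then identify it with $\tau_t^*$.

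Two smaller points. In item (2) your sketch suggests $K$ might grow by an $\epsilon$-related amount on $(t,D_t^\epsilon]$; in fact it does not grow at all there: for every $s\in(t,D_t^\epsilon]$ one has $Y_u>h_u+\epsilon$ for $u\in[t,s)$, hence $Y_{s^-}\ge h_{s^-}+\epsilon>h_{s^-}$, so the Skorohod condition (Remark~\ref{skorohodalt}) gives $K_{D_t^\epsilon}=K_t$ exactly, and the $\epsilon$ enters only through $Y_{D_t^\epsilon}\le h_{D_t^\epsilon}+\epsilon$ on $\{D_t^\epsilon<T\}$. For item (1), your shortcut via the construction in Proposition~\ref{prop:given_generators_case} plus uniqueness is valid; the paper instead argues directly from the RBSDE (condition the equation between $t$ and $\tau$, use that $K$ is increasing and $Y_\tau\ge h_\tau\indb{\tau<T}+\xi\indb{\tau\ge T}$ for the $\ge$ direction, and item (2) for the reverse), which is self-contained and does not rely on how the solution was built.
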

\begin{rem}
    The condition on the third point may seem unusual, but it is satisfied for example if $h_t$ is left upper semi continuous on [0,T] and $h_T<\xi$.
\end{rem}
\begin{proof}
    Let $\tau\in\mathcal{T}_t$ and consider the first equation \eqref{eq:simple_Y} between $t$ and $\tau$:
    $$
    Y_t=Y_\tau+\int_t^\tau f_sdA_s+\int_t^\tau g_sds-\int_t^\tau \int_EZ_s(y)q(dsdy)+K_\tau-K_t.
    $$
    By taking conditioning at $\mathcal{F}_t$ we have
    \begin{align}
    Y_t&=\econd{Y_\tau+\int_t^\tau f_sdA_s+\int_t^\tau g_sds+K_\tau-K_t}\label{eq:cond_equation}\\
    &\geq\econd{h_\tau\ind_{\lbrace\tau<T\rbrace}+\xi\ind_{\lbrace\tau\geq T\rbrace}+\int_t^\tau f_sdA_s+\int_t^\tau g_sds}\label{eq:cond_superior},
    \end{align}
    since the integral on $q$ is a martingale, $K$ is increasing and $Y_t\geq h_\tau\ind_{\lbrace t<T\rbrace}+\xi\ind_{\lbrace t=T\rbrace}$.
    To prove the reverse inequality, consider $\epsilon>0$ and the corresponding $D_t^\epsilon$. It holds that $Y_{D_t^\epsilon}\leq h_{D_t^\epsilon}+\epsilon$ on $ \{D_t^\epsilon<T\} $. And on $\{D_t^\epsilon=T\}$ we have $Y_u>h_u+\epsilon$ for all $t\leq u < T$. Then , between $t$ and $D_t^\epsilon$, $Y_{s^-}>h_{s^-}$ and thus

    $$
    \int_t^{D_t^\epsilon}(Y_{s^-}-h_{s^-})dK_s=0 \Rightarrow K_{D_t^\epsilon}=K_t.
    $$
    Considering all this in \eqref{eq:cond_equation} we have

    \begin{align}
    Y_t&=\econd{Y_{D_t^\epsilon}+\int_t^{D_t^\epsilon} f_sdA_s+\int_{t}^{D_t^\epsilon}g_sds}\nonumber\\
    &\leq \econd{h_{D_t^\epsilon}\ind_{\lbrace{D_t^\epsilon}<T\rbrace}+\xi\ind_{\lbrace{D_t^\epsilon}=T\rbrace}+\int_t^{D_t^\epsilon} f_sdA_s+\int_{t}^{D_t^\epsilon}g_sds}+\epsilon. \label{eq:epsilon_opt}
    \end{align}
    This together with \eqref{eq:cond_superior} proves points one and two.
    For the third point, notice that $D_t^\epsilon$ are non increasing in $\epsilon$ and that $D_t^\epsilon\leq\tau^*$. Thus $D_t^\epsilon\rightarrow D_t^0\leq\tau^*$ when $\epsilon\rightarrow 0$.
    Now since $h_t\ind_{\lbrace t<T\rbrace}+\xi\ind_{\lbrace t=T\rbrace}$ is left USCE and the integral part is too, we have from \eqref{eq:epsilon_opt}
    \begin{align*}
    \eval[Y_t]\leq\limsup\limits_{\epsilon\rightarrow 0}\eval\left[h_{D_t^\epsilon}\ind_{\lbrace{D_t^\epsilon}<T\rbrace}+\xi\ind_{\lbrace{D_t^\epsilon}=T\rbrace}+\int_t^{D_t^\epsilon} f_sdA_s+\int_t^{D_t^\epsilon} g_sds\right]\\
    \leq \eval\left[h_{D_t^0}\ind_{\lbrace{D_t^0}<T\rbrace}+\xi\ind_{\lbrace{D_t^0}=T\rbrace}+\int_t^{D_t^0} f_sdA_s+\int_t^{D_t^0} g_sds\right].
    \end{align*}
    Thus we have
    $$
    \eval\left[Y_t\right]= \eval\left[h_{D_t^0}\ind_{\lbrace{D_t^0}<T\rbrace}+\xi\ind_{\lbrace{D_t^0}=T\rbrace}+\int_t^{D_t^0} f_sdA_s+\int_t^{D_t^0} g_sds\right],
    $$
    so  $D_t^0$ is optimal (see \ref{res:ap_snell_optimcar}). We only need to prove that $D_t^0=\tau^*$. We already know that $D_t^0\leq\tau^*$. On the other hand, since $D_t^0$ is optimal it holds that $Y_{D_t^0}=\eta_{D_t^0}$, and thus by the definition of $\tau^*$, $\tau^*\leq D_t^0$. This also proves that $\tau^*$ is the smallest optimal stopping time.
    \end{proof}

A further interesting property holds when the reward is left USCE:
\begin{prop}
Under assumptions  \ref{Ass:data}-(i)(iv) and \ref{Ass:givengen}, if $h_\tau\ind_{\lbrace\tau<T\rbrace}+\xi\ind_{\lbrace\tau\geq T\rbrace}$ is also left USCE, then $K$ in the solution of $\eqref{eq:simple_Y}$ is continuous.
\end{prop}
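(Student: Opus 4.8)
The plan is to identify $K$ as the predictable nondecreasing part in the Doob--Meyer decomposition of the Snell envelope already constructed in the proof of Proposition~\ref{prop:given_generators_case}, and to deduce its continuity from the regularity of the Snell envelope of a left-USCE reward. Recall from that proof that $\tilde Y_t:=Y_t+\int_0^{t\wedge T}f_s\,dA_s+\int_0^{t\wedge T}g_s\,ds$ equals the Snell envelope $R(\eta)$ of the càdlàg process $\eta$ of~\eqref{eq:etat}, that $\tilde Y$ is of class $[D]$, and that its Doob--Meyer decomposition is $\tilde Y=M-K$ with $M$ a square-integrable (hence uniformly integrable) martingale and $K=K^c+K^d\in\mathcal I^2$ predictable and increasing. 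It is therefore enough to prove that $K$ is continuous, i.e.\ $K^d\equiv 0$. (We use throughout that Assumption~\ref{Ass:compensatore} is in force, as it is needed for~\eqref{eq:simple_Y} to be solvable.)

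\emph{Step 1: $\eta$ is left-USCE.} Write $\eta_t=\Phi_t+\big(h_t\ind_{\{t<T\}}+\xi\ind_{\{t\ge T\}}\big)$ with $\Phi_t:=\int_0^{t\wedge T}f_s\,dA_s+\int_0^{t\wedge T}g_s\,ds$. By Assumption~\ref{Ass:compensatore} the process $\Phi$ is continuous, and $|\Phi_t|\le\int_0^T|f_s|\,dA_s+\int_0^T|g_s|\,ds$, a random variable lying in $L^2\subset L^1$ by Remark~\ref{stimessus} and Assumption~\ref{Ass:givengen}; hence for any $\theta_n\uparrow\theta$ in $\mathcal T_0$ dominated convergence gives $\eval[\Phi_{\theta_n}]\to\eval[\Phi_\theta]$, and therefore $\limsup_n\eval[\eta_{\theta_n}]=\eval[\Phi_\theta]+\limsup_n\eval[h_{\theta_n}\ind_{\{\theta_n<T\}}+\xi\ind_{\{\theta_n\ge T\}}]\le\eval[\eta_\theta]$ by the hypothesis on $h_\cdot\ind_{\{\cdot<T\}}+\xi\ind_{\{\cdot\ge T\}}$. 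Together with $\eval[\eta_\theta]<\infty$ (Lemma~\ref{etains2}) this shows $\eta$ is left-USCE; it is also càdlàg and of class $[D]$.

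\emph{Step 2: conclude via regularity.} The Snell envelope of a càdlàg, class-$[D]$, left-USCE process is regular, i.e.\ $R(\eta)_{\tau^-}={}^{p}R(\eta)_\tau$ for every predictable stopping time $\tau$ (see Appendix~\ref{app:snell_env}, in particular \ref{res:ap_snell_regular}, and \cite{kobylanski2012optimal}). Since $M$ is a uniformly integrable càdlàg martingale it is regular, hence $K=M-R(\eta)$ is regular as well; being predictable, $K$ satisfies ${}^{p}K_\tau=K_\tau$ for every predictable $\tau$, whence $K_{\tau^-}={}^{p}K_\tau=K_\tau$, i.e.\ $K$ has no jump at any predictable time. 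As all jumps of a predictable process occur at predictable times, $K$ is continuous, so $K^d\equiv0$ and $K$ in the solution of~\eqref{eq:simple_Y} is continuous.

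I expect the main obstacle to be the regularity statement in Step~2. Quoted from the literature it is immediate; to argue within the paper one would use that under the left-USCE hypothesis $\tau^*_t$ of Proposition~\ref{prop:representation}(3) is optimal, so by \ref{res:ap_snell_optimcar} the process $s\mapsto R(\eta)_{s\wedge\tau^*_t}$ is a martingale and hence, being $M$ minus a predictable nondecreasing process null at $t$, it forces $K$ to be constant on each stochastic interval $[\![t,\tau^*_t]\!]$; one then rules out a jump of $K$ by an announcing-sequence argument. The delicate point is exactly this last step: constancy of $K$ on $[\![t,\tau^*_t]\!]$ does not by itself exclude a jump of $K$ at the endpoint $\tau^*_t$, and excluding it requires invoking the left-USCE of the reward once more, along an announcing sequence of that (in the bad case, predictable) endpoint, together with the class-$[D]$ property to pass to the limit of the expectations.
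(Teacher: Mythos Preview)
Your approach and the paper's are essentially the same: both identify $K$ with the predictable increasing part in the Doob--Meyer decomposition of the Snell envelope $R(\eta)$ and then invoke \cite{kobylanski2012optimal} (Proposition~B.10 there) to obtain continuity under the left-USCE hypothesis. Two small remarks are worth making. First, your citation of \ref{res:ap_snell_regular} is misplaced: that item \emph{assumes} regularity of the Snell envelope and uses it to exhibit a largest optimal stopping time; it does not establish regularity from left-USCE. The input you need is precisely the result in \cite{kobylanski2012optimal}, and the route you take through regularity of $R(\eta)$ is simply the content of that proposition rephrased. Second, the cited result in \cite{kobylanski2012optimal} is stated for \emph{nonnegative} rewards of class~$[D]$; the paper therefore first performs the shift $\tilde\eta_t=\eta_t-N_t$, $N_t=\econd{\inf_s\eta_s}$ (the same reduction used in the proof of Proposition~\ref{res:ap_general_snell}) to make the reward nonnegative, checks that $\tilde\eta$ inherits left-USCE, applies the proposition to $R(\tilde\eta)$, and then reads off $K=\bar K$ by uniqueness of the Doob--Meyer decomposition. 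Your Step~1 already shows $\eta$ is left-USCE, so adding this one-line shift makes your argument coincide with the paper's.
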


\begin{proof}
The proof is given in \cite{kobylanski2012optimal} in the case were the reward is a positive progressive process $\phi$ of class [D].
We can adapt to our case by using the transformation
$$
I=\inf_t\eta_t \qquad N_t=\econd{I} \qquad \tilde{\eta}_t=\eta_t-N_t.
$$
We have that $\tilde{\eta}_t$ is USCE, as $\eval[\tilde{\eta}_t]=\eval[\eta_t]-\eval[I]$. Indeed let $\theta_n\uparrow\theta$, then
$$
\limsup\limits_{n\rightarrow\infty}\eval[\tilde{\eta}_{\theta_n}]\leq\eval[\tilde{\eta}_\theta].
$$
Then if $R(\eta)$ denotes the Snell envelope of $\eta$, it holds that
$   R(\tilde{\eta})=R(\eta)-N_t$. The Doob-Meyer decomposition for the càdlàg supermartingale $R(\tilde{\eta})$ holds:
$$
R(\tilde{\eta})_t=\tilde{M}_t-\bar{K}_t
$$
With $\bar{K}$ continuous thanks to Proposition B.10 in \cite{kobylanski2012optimal}.
Then $Y_t+\int_0^tf_sdA_s=R(\eta)=R(\tilde{\eta})+N_t=\tilde{M}_t+N_t-\bar{K}_t$, but since the decomposition is unique, $\int_0^t\int_E Z_s(y)q(dsdy)=M_t=\tilde{M}+N_t$ and $K_t=\bar{K}_t$. Thus the term $K$ is continuous.
\end{proof}


If we are interested only in \eqref{eq:system_Y_only_MPP}, and we have a filtration generated only by a MPP and $g\equiv0$, the proofs above are still applicable. In this case, there is no particular reason to use a $L^2$ space, since the martingale representation theorem for marked point processes works in $L^1$ (see \cite{jacod1975multivariate}). We thus obtain the following:
\begin{prop}
    \label{prop:only_MPP_simple}
    Let assumption \ref{Ass:compensatore} hold. Let ${\xi}$ be a $\mathcal{G}_T$-measurable random variable. Let ${f},{h}$ be  $\mathbb{G}$-progressive processes. Assume that
    $$
    \evals{|{\xi}|+\int_0^T|{f}_s|dA_s+\sup\limits_{t\in[0,T]}|{h}_t|}<\infty.
    $$
    Then there exists a unique solution to the system
    \begin{equation}
    \label{eq:simple_Y_only_MPP}
    \begin{cases}
    {Y}_t={\xi}+\int_t^T{f}_sdA_s-\int_t^T\int_E{U}_s(y)q(dsdy)+{K}_T-{K}_t\\
    {Y}_t\geq {h}_t \qquad \forall t\in[0,T] \text{ a.s.}\\
    \int_0^T({Y}_{s}-{h}_{s})d{K}^c_s=0 \text{ and } \Delta {K}_s\leq({h}_{s^-}-{Y}_s)^+\ind_{\lbrace {Y}_{s^-}={h}_{s^-}\rbrace}.
    \end{cases}
    \end{equation}
    where ${Y}$ is a càdlàg $\mathbb{G}$-adapted process such that $\eval\left[|{Y}_t|\right]<\infty$ for all $t$, ${K}$ is a $\mathbb{G}$-predictable càdlàg increasing process with ${K}_0=0$ and $\eval\left[{K}_T\right]<\infty$ and ${U}$ is a $\mathcal{P}(\mathbb{G})\otimes\mathcal{E}$-measurable process such that
    $\eval\left[\int_0^T\int_E|U_s(e)|\phi_s(de)dA_s\right]<\infty.$
\end{prop}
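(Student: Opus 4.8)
The plan is to re-run, in the $L^1$ framework natural for a pure marked point process, the three-part argument developed in Propositions~\ref{prop:given_generators_case}, \ref{uniquenessgivengenerators} and \ref{prop:representation}, setting $g\equiv 0$, $Z\equiv 0$ and $\mathbb{G}=\mathbb{F}$. For \emph{existence}, I would first define $\eta_t=\int_0^{t\wedge T}{f}_s\,dA_s+{h}_t\indb{t<T}+{\xi}\indb{t\geq T}$ as in \eqref{eq:etat}. Under the stated hypothesis one has, for every stopping time $\tau$, the crude bound $|\eta_\tau|\leq \int_0^T|{f}_s|\,dA_s+\sup_{t\in[0,T]}|{h}_t|+|{\xi}|$, whose right-hand side is integrable; hence $\eta$ is càdlàg and of class~$[D]$ (this replaces Lemma~\ref{etains2}, with no weights needed). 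Its Snell envelope $R(\eta)$ therefore exists, is càdlàg of class~$[D]$, and has a Doob--Meyer decomposition $R(\eta)_t=M_t-{K}_t$ with $M$ a uniformly integrable martingale and ${K}$ predictable increasing, ${K}_0=0$, $\eval[{K}_T]<\infty$. Setting ${Y}_t=R(\eta)_t-\int_0^{t\wedge T}{f}_s\,dA_s$ gives ${Y}$ càdlàg and, choosing $\tau=t$ in the stopping representation, ${Y}_t\geq {h}_t$ for $t<T$ and ${Y}_T={\xi}$; exactly as in the proof of Proposition~\ref{prop:given_generators_case} one gets $\Delta {K}_t=\Delta {K}_t\indb{R(\eta)_{t^-}=\eta_{t^-}}=\Delta {K}_t\indb{{Y}_{t^-}={h}_{t^-}}$ on $[0,T]$.

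Next I would invoke the martingale representation theorem for marked point processes in its $L^1$ form (\cite{jacod1975multivariate}; this is legitimate because the filtration is generated by $p$ alone), to write $M_t=M_0+\int_0^t\int_E {U}_s(e)\,q(dsde)$ with $\eval[\int_0^T\int_E|{U}_s(e)|\phi_s(de)\,dA_s]<\infty$. This $q$-integral is then a true martingale, so $({Y},{U},{K})$ satisfies the first line of \eqref{eq:simple_Y_only_MPP}, with all processes in the required classes. The Skorohod conditions are obtained exactly as in Step~2 of the proof of Proposition~\ref{prop:given_generators_case}: the jump inequality follows from \eqref{separatedjumps} together with the identity for $\Delta {K}$ above, and $\int_0^T({Y}_s-{h}_s)\,d{K}^c_s=0$ comes from the auxiliary processes $\tilde{Y}_t={Y}_t+\int_0^t {f}_s\,dA_s+{K}^d_t=M_t-{K}^c_t$ and $\tilde\eta_t=\eta_t+{K}^d_t$, the regularity of $M-{K}^c$, and the optimality of $D^*_t=\inf\{s\geq t:{K}^c_s>{K}^c_t\}$ (cf. \ref{res:ap_snell_decomp}, \ref{res:ap_snell_optimcar}); none of these steps uses the $L^2$ structure.

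For \emph{uniqueness}, note that the optimal stopping representation of Proposition~\ref{prop:representation}(1)--(2) goes through unchanged in $L^1$: for any solution and any $\tau\in\mathcal{T}_t$, conditioning the equation gives ${Y}_t=\econd{{Y}_\tau+\int_t^\tau {f}_s\,dA_s+{K}_\tau-{K}_t}\geq\econd{{h}_\tau\indb{\tau<T}+{\xi}\indb{\tau\geq T}+\int_t^\tau {f}_s\,dA_s}$, while the reverse inequality up to $\epsilon$ follows from $D^\epsilon_t=\inf\{s\geq t:{Y}_s\leq {h}_s+\epsilon\}\wedge T$, on which $\int_t^{D^\epsilon_t}({Y}_{s^-}-{h}_{s^-})\,d{K}_s=0$ forces ${K}_{D^\epsilon_t}={K}_t$. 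Hence ${Y}$ is the value function, so it is the same for any two solutions; writing $\bar{U}={U}'-{U}''$, $\bar{K}={K}'-{K}''$, subtracting the equations yields $\int_0^t\int_E\bar{U}_s(e)\,q(dsde)=\bar{K}_t$ for all $t$ (the constant being $0$ at $t=0$). The left-hand side is a martingale and $\bar{K}$ is predictable of finite variation, so this common process is a predictable finite-variation martingale null at $0$, hence identically $0$; thus $\bar{K}\equiv 0$, and since $A$ is continuous the equality $\int_0^t\int_E\bar{U}_s(e)\,p(dsde)=\int_0^t\int_E\bar{U}_s(e)\phi_s(de)\,dA_s\equiv0$ (continuous equal to pure jump) forces $\bar{U}_{T_n}(\xi_n)=0$ for all $n$, i.e. $\eval[\int_0^T\int_E|\bar{U}_s(e)|\phi_s(de)\,dA_s]=\eval[\sum_{T_n\leq T}|\bar{U}_{T_n}(\xi_n)|]=0$. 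The main obstacle is precisely this uniqueness step: in the $L^1$ setting one cannot run the Itô-formula energy estimate of Proposition~\ref{uniquenessgivengenerators} (we only have $\eval[{K}_T]<\infty$, not $\eval[{K}_T^2]<\infty$), so the argument must be routed through the Snell-envelope characterization plus the ``predictable finite-variation martingale vanishes'' fact; one should also verify that Jacod's $L^1$ representation theorem genuinely applies under our sole hypotheses (non-explosive MPP, totally inaccessible jumps, general Borel mark space), which it does.
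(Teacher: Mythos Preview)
Your proposal is correct and follows essentially the same route as the paper: existence via the Snell envelope of $\eta$ (now only of class $[D]$, which suffices for \eqref{eq:app_peskir_cond}), the $L^1$ martingale representation for $\mathbb{G}$, and the Skorohod conditions via the same Step~2 argument; uniqueness by first identifying $Y$ with the value function as in Proposition~\ref{prop:representation} and then separating $\bar U$ from $\bar K$. The only cosmetic difference is that the paper reads off $U'_{T_n}(\xi_n)=U''_{T_n}(\xi_n)$ directly from the disjointness of the totally inaccessible jumps of the $q$-integral and the predictable jumps of $K$, and deduces $\bar K\equiv 0$ afterwards, whereas you invoke the ``predictable finite-variation martingale vanishes'' fact to kill $\bar K$ first; both orderings are valid and rest on the same jump dichotomy.
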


\begin{proof}
    Existence of a solution is obtained as in \ref{prop:given_generators_case}. The process $\eta_t$ satisfies then the weaker condition $\evals{\sup_t|\eta_t|}<\infty$, but this is enough to apply the Snell's envelope results (see appendix \ref{app:snell_env}, in particular \eqref{eq:app_peskir_cond}). Integrability is straightforward. Now let $(Y',U',K')$ and $(Y'',U'',K'')$ be two solutions, their difference satisfies
    \begin{equation}
    \label{eq:difference_only_MPP}
    Y'_t-Y''_t=Y'_0-Y''_0+\int_0^t\int_E(U'_s(e)-U''_s(e))q(dsde)-(K'_t-K''_t).
    \end{equation}
    Uniqueness of the component ${Y}$ comes from the fact that if $({Y},{U},{K})$ satisfies the equation, the càdlàg process ${Y}$ satisfies
    $$
    {Y}_t=\essup\limits_{\tau \in \mathcal{T}_t}\left[\left.\int_t^{\tau\wedge T}{f}_sdA_s+{h}_\tau\indb{\tau<T}+{\xi}\indb{\tau\geq T}\right|\mathcal{G}_t\right],
    $$
    which can be shown as in proposition \ref{prop:representation}, adapted to the this case with less integrability. Relation \eqref{eq:difference_only_MPP} becomes
    $$
    \int_0^t\int_EU'_s(e)q(dsde)-K'_t=\int_0^t\int_EU''_s(e)q(dsde)-K''_t.
    $$
    Since the predictable jumps of $K$ and the totally inaccessible jumps of the integrals with respect to $q$ are disjoint, we have that $U'_{T_n}(\xi_n)=U''_{T_n}(\xi_n)$ for all $n$. Then
    \begin{align*}
    \int_0^T\int_E|U'_s(e)-U''_s(e)|\phi_s(de)dA_s&=\int_0^T\int_E|U'_s(e)-U''_s(e)|p(dsde)\\
    &=\sum_{n\geq 1}^{}|U'_{T_n}(\xi_n)-U''_{T_n}(\xi_n)|=0,
    \end{align*}
    and thus $U'_s(e)=U''_s(e)$ $\phi_s(de)dA_sd\mathbb{P}$-a.e. Then $K'_t=K''_t$ a.s. and uniqueness is proven.
\end{proof}

We have then a result for optimal stopping analogous to proposition \ref{prop:representation}:
\begin{prop}
Assume that the conditions of proposition \ref{prop:only_MPP_simple} hold. Then
    \begin{enumerate}[leftmargin=*]
	\item  The solution to the RBSDE \eqref{eq:simple_Y_only_MPP} is a solution to the optimal stopping problem
	
	$$
	Y_t=\essup\limits_{\tau \in \mathcal{T}_t}\econd{\int_t^\tau f_sdA_s+\int_0^\tau g_sds+h_\tau\ind_{\lbrace\tau<T\rbrace}+\xi\ind_{\lbrace\tau\geq T\rbrace}}.
	$$
	\item For all $\epsilon>0$, define $D_t^\epsilon$ as
	$$
	D_t^\epsilon=\inf\left\lbrace s\geq t : Y_s\leq h_s+\epsilon \right\rbrace \wedge T.
	$$
	Then $D_t^\epsilon$ is an $\epsilon$-optimal stopping time in the sense that
	$$
	Y_t\leq \essup\limits_{\tau \in \mathcal{T}_t}\econd{\int_t^{D_t^\epsilon} f_sdA_s+\int_0^{D_t^\epsilon} g_sds+h_{D_t^\epsilon}\ind_{\lbrace{D_t^\epsilon}<T\rbrace}+\xi\ind_{\lbrace{D_t^\epsilon}\geq T\rbrace}}+\epsilon.
	$$
	\item
	If in addition  $h_t\ind_{\lbrace t<T\rbrace}+\xi\ind_{\lbrace t\geq T\rbrace}$ is left USCE, then
	$$
	\tau_t^*=\inf\left\lbrace s\geq t : Y_s\leq h_s \right\rbrace \wedge T.
	$$
	is optimal and is the smallest of all optimal stopping times. Moreover, the process $K$ is continuous.
\end{enumerate}
\end{prop}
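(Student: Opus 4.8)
The plan is to repeat, essentially line by line, the argument of Proposition~\ref{prop:representation}, carried out now in the $L^1$ framework of Proposition~\ref{prop:only_MPP_simple} and with the simplifications $g\equiv 0$, $Z\equiv 0$. The only structural facts one needs are: (i) under the assumptions of Proposition~\ref{prop:only_MPP_simple} the reward process $\eta_t=\int_0^{t\wedge T}f_s\,dA_s+h_t\indb{t<T}+\xi\indb{t\ge T}$ satisfies $\eval[\sup_{t}|\eta_t|]<\infty$, hence is of class $[D]$ and the Snell-envelope results of Appendix~\ref{app:snell_env} apply via \eqref{eq:app_peskir_cond}; and (ii) the $q$-integral $\int_0^\cdot\int_E U_s(e)\,q(dsde)$ is a genuine martingale, which here is immediate from $\eval[\int_0^T\int_E|U_s(e)|\phi_s(de)\,dA_s]<\infty$. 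Note also that the representation $Y_t=\essup_{\tau\in\mathcal T_t}\econd{\int_t^{\tau\wedge T}f_s\,dA_s+h_\tau\indb{\tau<T}+\xi\indb{\tau\ge T}}$ was already essentially obtained inside the proof of Proposition~\ref{prop:only_MPP_simple}, so the core of the work is only to re-derive the $\epsilon$-optimality and the optimal stopping time.

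For point~1, I would fix $\tau\in\mathcal T_t$, write the first equation of \eqref{eq:simple_Y_only_MPP} between $t$ and $\tau\wedge T$, and take $\mathcal G_t$-conditional expectation: the $q$-integral vanishes by (ii), and since $K$ is increasing and $Y_\tau\ge h_\tau\indb{\tau<T}+\xi\indb{\tau\ge T}$ (using $Y_T=\xi$ and $Y\ge h$), this gives $Y_t\ge\econd{\int_t^{\tau\wedge T}f_s\,dA_s+h_\tau\indb{\tau<T}+\xi\indb{\tau\ge T}}$. For the reverse inequality and point~2, take $\epsilon>0$ and $D_t^\epsilon$ as in the statement: on $[t,D_t^\epsilon)$ one has $Y_{s^-}>h_{s^-}$, so by the Skorohod condition in the form \eqref{skorohodequivalent} the process $K$ is constant on $[t,D_t^\epsilon]$; moreover $Y_{D_t^\epsilon}\le h_{D_t^\epsilon}+\epsilon$ on $\{D_t^\epsilon<T\}$ and $Y_{D_t^\epsilon}=\xi$ on $\{D_t^\epsilon=T\}$. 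Plugging this into the conditional form of the equation between $t$ and $D_t^\epsilon$ yields $Y_t\le\econd{\int_t^{D_t^\epsilon}f_s\,dA_s+h_{D_t^\epsilon}\indb{D_t^\epsilon<T}+\xi\indb{D_t^\epsilon\ge T}}+\epsilon$, which is point~2 and, combined with the lower bound, point~1.

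For point~3 I would argue exactly as in Proposition~\ref{prop:representation}: the $D_t^\epsilon$ are nonincreasing in $\epsilon$ with $D_t^\epsilon\le\tau_t^*$, so $D_t^\epsilon\downarrow D_t^0\le\tau_t^*$. Using that $h_t\indb{t<T}+\xi\indb{t\ge T}$ is left USCE together with the left-USCE property of $t\mapsto\int_0^t f_s\,dA_s$ (which holds since this map is a.s.\ continuous and dominated by the integrable $\int_0^T|f_s|\,dA_s$), I pass to the limit $\epsilon\downarrow 0$ in the expectation version of the $\epsilon$-optimality inequality through Reverse Fatou with dominant $\sup_t|\eta_t|$, obtaining $\eval[Y_t]=\eval[\int_t^{D_t^0}f_s\,dA_s+h_{D_t^0}\indb{D_t^0<T}+\xi\indb{D_t^0\ge T}]$; hence $D_t^0$ is optimal, so $Y_{D_t^0}=\eta_{D_t^0}$ by the characterization \ref{res:ap_snell_optimcar}, and by definition of $\tau_t^*$ this forces $\tau_t^*\le D_t^0$, whence $\tau_t^*=D_t^0$ and it is the smallest optimal stopping time. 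Finally, continuity of $K$ follows by the same device used in the analogous statement above: subtract from $\eta$ the martingale $N_t=\econd{\inf_s\eta_s}$ so that $\tilde\eta=\eta-N$ is USCE and of class $[D]$, apply Proposition~B.10 of \cite{kobylanski2012optimal} to conclude that the increasing part in the Doob--Meyer decomposition of $R(\tilde\eta)$ is continuous, and transfer this to $K$ via $R(\tilde\eta)=R(\eta)-N$ and uniqueness of the decomposition.

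The only genuine obstacle — and the reason this is not literally the proof of Proposition~\ref{prop:representation} verbatim — is that we work in $L^1$ rather than $L^2$, so the $L^2$-martingale machinery used elsewhere in the paper (It\^o's formula on $e^{\beta(A_t+t)}Y_t^2$, Burkholder--Davis--Gundy, the martingale representation with $L^2$ bounds) is unavailable; instead one must rely purely on the explicit Snell-envelope representation already established in Proposition~\ref{prop:only_MPP_simple}, on the fact that the $q$-integral is a true martingale directly from the $L^1$ integrability of $U$, and on the class-$[D]$ property of $\eta$, which is exactly what makes the appendix results on optimal stopping times and on the Doob--Meyer decomposition applicable in this weaker setting. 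Everything else is a transcription of the $L^2$ argument with $g$ and $Z$ deleted.
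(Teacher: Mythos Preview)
Your proposal is correct and is precisely the approach the paper intends: the paper does not write out a proof for this proposition, merely introducing it as ``a result for optimal stopping analogous to proposition~\ref{prop:representation}'', so your adaptation of that proof to the $L^1$/$\mathbb{G}$-only setting (dropping $g$ and $Z$, replacing $L^2$ martingale arguments by the direct $L^1$ martingale property of the $q$-integral, and invoking the class-$[D]$ Snell-envelope machinery via \eqref{eq:app_peskir_cond}) is exactly what is expected. Your handling of the continuity of $K$ by transplanting the separate $\tilde\eta=\eta-N$ argument is also the right move, since in this statement that conclusion has been folded into point~3 rather than stated separately.
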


    \section{Reflected BSDE}
    \label{sec:general_BSDE}
        We now turn to the case where the generators depend on the solution, that is equation \eqref{eq:system_Y_both}.
%
Denote by $\lambda$ the Lebesgue measure on $[0,T]$, and introduce now $L^{2,\beta}(\Omega\times[0,T],\mathcal{F}\otimes\mathcal{B}([0,T]),(A(\omega,dt)+\lambda(dt))$, the space of all $\mathbb{F}$-progressive processes such that
$$
\|Y\|^2_{L^{2,\beta}(A+\lambda)}=\evals{\int_{0}^{T}e^{\beta A_s}Y_s^2(dA_s+ds)}<\infty.
$$
For brevity we denote is as $L^{2,\beta}(A+\lambda)$ in the following. It is a Hilbert space equipped with the norm above. It is clear that a process is in $L^{2,\beta}(A+\lambda)$ if and only if lies in $Y\in L^{2,\beta}(A)\cap L^{2,\beta}(W)$.

\begin{teo}
    \label{teo:both_processes}
    Let assumption \ref{Ass:compensatore} and \ref{Ass:data} hold for some $\beta>L_p^2+2L_f$. Then there exists a unique solution to \eqref{eq:system_Y_both}.
\end{teo}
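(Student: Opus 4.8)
The plan is to set up a fixed-point argument on the Hilbert space $L^{2,\beta}(A+\lambda)\times L^{2,\beta}(p)\times L^{2,\beta}(W)$, using Proposition \ref{prop:given_generators_case} to solve the ``frozen-coefficient'' equation at each iteration. Concretely, given $(\widehat Y,\widehat U,\widehat Z)$ in this space, define the progressive processes $f_s:=f(s,\widehat Y_s,\widehat U_s)$ and $g_s:=g(s,\widehat Y_s,\widehat Z_s)$; by the Lipschitz bounds in Assumption \ref{Ass:data}-(ii)(b), (iii)(b) together with the integrability of $f(\cdot,0,0)$, $g(\cdot,0,0)$ and the fact that $(\widehat Y,\widehat U,\widehat Z)$ is in the given space, these $f,g$ satisfy Assumption \ref{Ass:givengen}. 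Hence Proposition \ref{prop:given_generators_case} yields a unique solution $(Y,U,Z,K)$ to \eqref{eq:simple_Y}, which lies in the right space by Step 3 of that proof, and we set $\Phi(\widehat Y,\widehat U,\widehat Z)=(Y,U,Z)$. A quadruple $(Y,U,Z,K)$ solves \eqref{eq:system_Y_both} if and only if $(Y,U,Z)$ is a fixed point of $\Phi$ (with $K$ the associated process from Proposition \ref{prop:given_generators_case}), so it remains to show $\Phi$ is a contraction on $L^{2,\beta}(A+\lambda)\times L^{2,\beta}(p)\times L^{2,\beta}(W)$ for $\beta>L_U^2+2L_f$.

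For the contraction estimate, take two inputs $(\widehat Y^i,\widehat U^i,\widehat Z^i)$, $i=1,2$, with outputs $(Y^i,U^i,Z^i,K^i)$, and write $\bar Y=Y^1-Y^2$, etc., and $\bar f_s=f(s,\widehat Y^1_s,\widehat U^1_s)-f(s,\widehat Y^2_s,\widehat U^2_s)$, $\bar g_s=g(s,\widehat Y^1_s,\widehat Z^1_s)-g(s,\widehat Y^2_s,\widehat Z^2_s)$. The difference $(\bar Y,\bar U,\bar Z,\bar K)$ satisfies the same equation as in the uniqueness proof (Proposition \ref{uniquenessgivengenerators}) but with the extra driver terms $\int_t^T\bar f_sdA_s+\int_t^T\bar g_sds$. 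Applying the Itô formula to $e^{\beta(A_t+t)}\bar Y_t^2$ exactly as there, using again that $\int_0^T\bar Y_{s^-}d\bar K_s\le 0$ by the Skorohod conditions and Remark \ref{skorohodalt}, and that the stochastic integrals are martingales (localize by $S_n$ as in \eqref{provamartingale}--\eqref{provamartingaledue} if needed), one obtains
\begin{align*}
\beta\|\bar Y\|^2_{L^{2,\beta}(A)}&+\beta\|\bar Y\|^2_{L^{2,\beta}(W)}+\|\bar U\|^2_{L^{2,\beta}(p)}+\|\bar Z\|^2_{L^{2,\beta}(W)}\\
&\le 2\evals{\int_0^Te^{\beta(A_s+s)}\bar Y_s\bar f_sdA_s}+2\evals{\int_0^Te^{\beta(A_s+s)}\bar Y_s\bar g_sds}.
\end{align*}
Now bound $|\bar f_s|\le L_f|\widehat{\bar Y}_s|+L_U(\int_E|\widehat{\bar U}_s(e)|^2\phi_s(de))^{1/2}$ and $|\bar g_s|\le L_g|\widehat{\bar Y}_s|+L_Z|\widehat{\bar Z}_s|$, and apply Young's inequality $2ab\le \epsilon a^2+\epsilon^{-1}b^2$ with carefully chosen $\epsilon$'s on each resulting term, so that the $\|\bar Y\|^2$ terms on the right can be absorbed into the left-hand side. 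The choice $\beta>L_U^2+2L_f$ is exactly what makes the coefficient of $\|\bar Y\|^2_{L^{2,\beta}(A)}$ strictly positive after absorbing the $dA_s$-terms (the $L_U^2$ from the $\bar U$-Young term and $2L_f$ from the $\bar Y$-Young term); the Brownian/Lebesgue terms can be handled by taking $\beta$ also larger than a constant depending on $L_g,L_Z$, or by noting $\int e^{\beta A_s+\beta s}\,ds$ carries its own decay. The upshot is an estimate of the form
$$
\|(\bar Y,\bar U,\bar Z)\|^2 \le \kappa\,\|(\widehat{\bar Y},\widehat{\bar U},\widehat{\bar Z})\|^2
$$
with $\kappa<1$ in a suitable equivalent norm on $L^{2,\beta}(A+\lambda)\times L^{2,\beta}(p)\times L^{2,\beta}(W)$.

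I expect the main obstacle to be the bookkeeping in this last absorption step: unlike the classical Brownian or Brownian–Poisson setting, one cannot invoke Gronwall, and the two time scales $dA_s$ and $ds$ must be tracked separately with the single weight $e^{\beta(A_s+s)}$, so the Young-inequality constants and the threshold on $\beta$ must be chosen consistently for both the $dA$- and $ds$-integrals simultaneously. A secondary technical point is confirming that the map $\Phi$ is well-defined as a map of the Hilbert space into itself — i.e. that the output $(Y,Z)$ really lies in $L^{2,\beta}(A)\cap L^{2,\beta}(W)$ and $U\in L^{2,\beta}(p)$, not merely in the weaker spaces — but this is precisely the content of Step 3 of the proof of Proposition \ref{prop:given_generators_case} applied to the frozen $f,g$, which do satisfy Assumption \ref{Ass:givengen}. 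Once the contraction is established, Banach's fixed point theorem gives existence and uniqueness of $(Y,U,Z)$, and the associated $K$ from Proposition \ref{prop:given_generators_case} completes the unique solution quadruple, with the Skorohod conditions automatically inherited.
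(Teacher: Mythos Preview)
Your strategy is exactly the paper's: build the map via Proposition \ref{prop:given_generators_case}, apply It\^o to a weighted square of the difference, use the Skorohod condition to get $\int_0^T\bar Y_{s^-}\,d\bar K_s\le 0$, and absorb via Young's inequality. The well-definedness of $\Phi$ and the treatment of $K$ are also handled as in the paper.

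There is, however, one point you flag as an obstacle but do not actually resolve. With the single weight $e^{\beta(A_s+s)}$, the absorption on the $ds$-side would force $\beta$ to also dominate a constant depending on $L_g,L_Z$, which is \emph{not} part of the hypothesis (only $\beta>L_U^2+2L_f$ is assumed). Your first suggested fix---enlarge $\beta$---therefore changes the statement; the second (``carries its own decay'') is not an argument as written.

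The paper's resolution is simple: apply It\^o to $e^{\beta A_s+\gamma s}\bar Y_s^2$ with a \emph{separate} parameter $\gamma>0$. Since $s\in[0,T]$ is bounded, the norms built with the weight $e^{\beta A_s+\gamma s}$ are equivalent to the original $L^{2,\beta}$ norms, so the underlying space is unchanged. One first chooses $\alpha\in(0,1)$ with $\beta>L_U^2/\alpha+2L_f/\sqrt{\alpha}$ (possible exactly because $\beta>L_U^2+2L_f$), and \emph{then} takes $\gamma>L_Z^2/\alpha+2L_g/\sqrt{\alpha}$; this makes $\Phi$ an $\alpha$-contraction for the equivalent norm
\[
\tfrac{L_f}{\sqrt{\alpha}}\|Y\|^2_{\beta,\gamma,A}+\tfrac{L_g}{\sqrt{\alpha}}\|Y\|^2_{\beta,\gamma,W}+\|U\|^2_{\beta,\gamma,p}+\|Z\|^2_{\beta,\gamma,W}.
\]
The second free parameter is precisely what decouples the $dA_s$- and $ds$-bookkeeping that you were worried about.
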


\begin{proof}

    We con We will use a contraction theorem on  $$\mathbb{L}^\beta=L^{2,\beta}(\Omega\times[0,T],\mathcal{F}\otimes\mathcal{B}([0,T]),(A(\omega,dt)+\lambda(dt))\mathbb{P}(d\omega))\times L^{2,\beta}(p)\times L^{2,\beta}(W).$$

    We construct a mapping $\Gamma$ that to each $(P,Q,R)\in L^{2,\beta}(A+\lambda)\times L^{2,\beta}(p)\times L^{2,\beta}(W)$ associates $(Y,U,Z)$ solution to equation \eqref{eq:simple_Y} when the generators are given by $f_t(P_t,Q_t)$ and $g_t(P_t,R_t)$.
    Such map is well defined: indeed if we fix $(P,Q,R)\in L^{2,\beta}(A+\lambda)\times L^{2,\beta}(p)\times L^{2,\beta}(W)$, thanks to assumption \ref{Ass:data}, the generators are known process that satisfy assumption \ref{Ass:givengen} and proposition \ref{prop:given_generators_case} and \ref{prop:unique_simple} give us the existence and uniqueness of $(Y,U,Z)\in L^{2,\beta}(A+\lambda)\times L^{2,\beta}(p)\times L^{2,\beta}(W)$. Notice that thanks to the Lipschitz conditions on $g$ and $f$, if we take two triplets $(P',Q',R')\equiv(P'',Q'',R'')$ in $L^{2,\beta}(A+\lambda)\times L^{2,\beta}(p)\times L^{2,\beta}(W)$, then $f_s(Y',U')\equiv f_s(Y'',U'')$ in $L^{2,\beta}(A)$ and $g_s(Y'.Z')\equiv g_s(Y'',Z'')$ in $L^{2,\beta}(W)$.

     Consider now $(P',Q',R')$ and $(P'',Q'',R'')$ in $\mathbb{L}^\beta$, and consider their images through $\Gamma$, $(Y',U',Z')=\Gamma(P',Q',R')$ and $(Y'',U'',Z'')=\Gamma(P'',Q'',R'')$. Denote $\bar{Y}=Y'-Y''$, $\bar{P}=P'-P''$ and so on. Denote also $\bar{f}_t=f_t(P'_t,Q'_t)-f_t(P''_t,Q''_t)$ and similarly denote $\bar{g}$. $(\bar{Y},\bar{U},\bar{Z},\bar{K})$ satisfies
    $$
    \bar{Y}=\int_t^T\bar{f}_sdA_s+\int_t^T\bar{g}_sds-\int_t^T\int_E\bar{U}_s(e)q(dsde)-\int_t^T\bar{Z}_sdW_s+\bar{K}_T-\bar{K}_t.
    $$

    We now apply Ito's Lemma to $e^{\beta A_s}e^{\gamma s}\bar{Y}^2_s$ obtaining, after taking expectation,
    \begin{multline*}
    \beta\evals{\int_0^Te^{\beta A_s}e^{\gamma s}\bar{Y}^2_sdA_s}
    +\gamma\evals{\int_0^Te^{\beta A_s}e^{\gamma s}\bar{Y}^2_sds}
    +\evals{\int_0^Te^{\beta A_s}e^{\gamma s}\bar{Z}^2_sdW_s}\\
    +\evals{\int_0^T\int_Ee^{\beta A_s}e^{\gamma s}\bar{U}^2_s\phi_s(de)dA_s}
    \leq 2\evals{\int_0^Te^{\beta A_s}e^{\gamma s}\bar{f}_s^2dA_s}\\
    +2\evals{\int_0^Te^{\beta A_s}e^{\gamma s}\bar{g}_s^2ds}
    +2\evals{\int_0^T\bar{Y}_{s^-}d\bar{K}_s}.
    \end{multline*}
    As in the proof of proposition \ref{prop:unique_simple}, we have that
    $$
    \int_0^T\bar{Y}_{s^-}d\bar{K}_s\leq 0.
    $$
    Denote by $||\cdot||_{\beta,\gamma,A}$ the norm (equivalent to $||\cdot||_{L^{2,\beta}(A)}$)
    $$
    \left(\evals{\int_0^Te^{\beta A_s}e^{\gamma s}\bar{Y}^2_sdA_s}\right)^{1/2},
    $$
    and similarly denote the norms $||\cdot||_{\beta,\gamma,p}$ and $||\cdot||_{\beta,\gamma,W}$.
    Using the Lipschitz properties of $f$ and $g$ this gives

    \begin{multline*}
    \beta||\bar{Y}||^2_{\beta,\gamma,A}+\gamma||\bar{Y}||^2_{\beta,\gamma,W}+||\bar{U}||^2_{\beta,\gamma,p}+||\bar{Z}||^2_{\beta,\gamma,W}\leq \\\leq 2L_f\evals{\int_0^T e^{\beta A_s}e^{\gamma s}|\bar{Y}_s||\bar{P}_s|dA_s}+2L_p\evals{\int_0^T e^{\beta A_s}e^{\gamma s}|\bar{Y}_s|\left(\int_E|\bar{Q}^2_s|\right)^{1/2}dA_s}\\+2L_g\evals{\int_0^T e^{\beta A_s}e^{\gamma s}|\bar{Y}_s||\bar{P}_s|ds}+2L_W\evals{\int_0^T e^{\beta A_s}e^{\gamma s}|\bar{Z}_s||\bar{R}_s|ds}.
    \end{multline*}
    Using the inequality $2ab\leq \alpha a^2+b^2/\alpha$ for $a,b\geq 0$ we obtain:


        \begin{multline*}
    \beta||\bar{Y}||^2_{\beta,\gamma,A}+\gamma||\bar{Y}||^2_{\beta,\gamma,W}+||\bar{U}||^2_{\beta,\gamma,p}+||\bar{Z}||^2_{\beta,\gamma,W}\\\leq \frac{L_f}{\sqrt{\alpha_p}}||\bar{Y}||^2_{\beta,\gamma,A}+{L_f}{\sqrt{\alpha}}||\bar{P}||^2_{\beta,\gamma,A}+\frac{L_p^2}{\alpha}||\bar{Y}||^2_{\beta,\gamma,A}+\alpha||\bar{Q}||^2_{\beta,\gamma,p}
    \\+\frac{L_g}{\sqrt{\alpha}}||\bar{Y}||^2_{\beta,\gamma,W}+{L_g}{\sqrt{\alpha}}||\bar{P}||^2_{\beta,\gamma,W}+\frac{L_W^2}{\alpha}||\bar{Y}||^2_{\beta,\gamma,A}+\alpha||\bar{R}||^2_{\beta,\gamma,A}.
    \end{multline*}
    Rewriting we obtain the following relation:
    \begin{multline}
    \label{eq:contraction_relation}
    ||\bar{U}||^2_{\beta,\gamma,p}+||\bar{Z}||^2_{\beta,\gamma,W}+\left(\beta-\frac{L_p^2}{\alpha}-\frac{L_f}{\sqrt{\alpha}}\right)||\bar{Y}||^2_{\beta,\gamma,A}\\
    +\left(\gamma-\frac{L_W^2}{\alpha}-\frac{L_g}{\sqrt{\alpha}}\right)||\bar{Y}||^2_{\beta,\gamma,W}
    \\
    \leq {L_f}{\sqrt{\alpha}}||\bar{P}||^2_{\beta,\gamma,A}+
    \alpha||\bar{Q}||^2_{\beta,\gamma,p}+
    {L_g}{\sqrt{\alpha}}||\bar{P}||^2_{\beta,\gamma,W}+
    \alpha||\bar{R}||^2_{\beta,\gamma,A}.
    \end{multline}
    Since $\beta>L_p^2+2L_f$, it is possible to choose $\alpha\in(0,1)$ such that
    $$
    \beta>\frac{L^2_p}{\alpha}+\frac{2L_f}{\sqrt{\alpha}},
    $$
    and for that $\alpha$, choose $\gamma$ such that $\gamma>L^2_W/\alpha+2L_g/\sqrt{\alpha}$. The relation \eqref{eq:contraction_relation} rewrites as
    \begin{multline}
    \label{eq:contraction_relation_more}
\frac{L_f}{\sqrt{\alpha}}||\bar{Y}||^2_{\beta,\gamma,A}+\frac{L_g}{\sqrt{\alpha}}||\bar{Y}||^2_{\beta,\gamma,W}+||\bar{U}||^2_{\beta,\gamma,p}+||\bar{Z}||^2_{\beta,\gamma,W}\\
    \leq {L_f}{\sqrt{\alpha}}||\bar{P}||^2_{\beta,\gamma,A}+
    \alpha||\bar{Q}||^2_{\beta,\gamma,p}+
    {L_g}{\sqrt{\alpha}}||\bar{P}||^2_{\beta,\gamma,W}+
    \alpha||\bar{R}||^2_{\beta,\gamma,A}\\
    =\alpha\left(\frac{L_f}{\sqrt{\alpha}}||\bar{P}||^2_{\beta,\gamma,A}+
    ||\bar{Q}||^2_{\beta,\gamma,p}+
    \frac{L_g}{\sqrt{\alpha}}||\bar{P}||^2_{\beta,\gamma,W}+
    ||\bar{R}||^2_{\beta,\gamma,A}\right).
    \end{multline}

    Now
    $$
    \frac{L_f}{\sqrt{\alpha}}||\bar{P}||^2_{\beta,\gamma,A}+\frac{L_g}{\sqrt{\alpha}}||\bar{P}||^2_{\beta,\gamma,W}=\evals{\int_0^Te^{\beta A_s}e^{\gamma s}\bar{P}_s^2(\frac{L_f}{\sqrt{\alpha}}dA_s+\frac{L_g}{\sqrt{\alpha}}ds)}
    $$
    is a norm equivalent to $\|\bar{P}\|_{L^{2,\beta}(A+\lambda)}$. We have thus that $\Gamma$ is a contraction on $\mathbb{L}$ for the equivalent norm
    $$
    \|(Y,U,Z)\|^2_{\mathbb{L}^\beta,\gamma}=\frac{L_f}{\sqrt{\alpha}}||Y||^2_{\beta,\gamma,A}+\frac{L_g}{\sqrt{\alpha}}||Y||^2_{\beta,\gamma,W}+||\bar{U}||^2_{\beta,\gamma,p}+||\bar{Z}||^2_{\beta,\gamma,W}.
    $$
    Since the space is complete, the contraction theorem assures us the existence of a unique triplet $(Y,U,Z)$ in $\mathbb{L}^\beta$ such that $(Y,U,Z)=\Gamma(Y,U,Z)$, and $(Y,U,Z,K)$ is the solution to \eqref{eq:system_Y_both}, where $K$ is the one associated to $(Y,Z,U)$ by the map $\Gamma$. Since we know

\end{proof}

This last result generalizes the case of Brownian and Poisson noise, allowing for a more general structure in the jump part.

If we are interested only on a BSDE driven by a marked point process, the proof above still applies when the filtration $\mathbb{G}$ is generated only by $p$ and the data are adapted to it. Then we have the counterpart of theorem \ref{teo:both_processes}

\begin{teo}
    Let assumptions \ref{Ass:compensatore} and \ref{Ass:data}(i,ii,iv) hold for some $\beta>L_p^2+2L_f$, but with the data adapted to the filtration $\mathbb{G}$. Then the system \eqref{eq:system_Y_only_MPP} admits a unique solution in $L^{2,\beta}(A)\times L^{2,\beta}(p)\times\mathcal{I}^2$.
\end{teo}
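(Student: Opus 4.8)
The plan is to mirror the proof of Theorem \ref{teo:both_processes}, discarding everything that concerns the Brownian component. I would set up a contraction argument on the space $\mathbb{L}^\beta_{\mathbb{G}}=L^{2,\beta}(A,\mathbb{G})\times L^{2,\beta}(p,\mathbb{G})$ and define a map $\Gamma$ sending $(P,Q)\in\mathbb{L}^\beta_{\mathbb{G}}$ to the solution $(Y,U,K)$ of the reflected BSDE \eqref{eq:simple_Y_only_MPP} with given generator $f_s(P_s,Q_s)$ (and $g\equiv 0$). The map is well defined because, by Assumption \ref{Ass:data}(ii) and the Lipschitz bound, whenever $(P,Q)\in\mathbb{L}^\beta_{\mathbb{G}}$ the process $s\mapsto f_s(P_s,Q_s)$ is $\mathbb{G}$-progressive and satisfies
$$
\eval\left[\int_0^Te^{\beta A_s}|f_s(P_s,Q_s)|^2dA_s\right]<\infty,
$$
i.e. it verifies the analogue of Assumption \ref{Ass:givengen} with $g=0$; hence the MPP-only version of Proposition \ref{prop:given_generators_case} (the argument of that proposition specialized to $g=0$, $\mathbb{G}=\mathbb{F}$, and working in $L^{2,\beta}(A,\mathbb{G})$ rather than the $L^1$ setting of Proposition \ref{prop:only_MPP_simple}) gives existence and uniqueness of $(Y,U,K)$ in the right spaces.

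Next I would estimate the distance between two images. Given $(P',Q')$ and $(P'',Q'')$ with images $(Y',U',K')$ and $(Y'',U'',K'')$, the difference satisfies
$$
\bar{Y}_t=\int_t^T\bar{f}_sdA_s-\int_t^T\int_E\bar{U}_s(e)q(dsde)+\bar{K}_T-\bar{K}_t,
$$
where $\bar f_s=f_s(P'_s,Q'_s)-f_s(P''_s,Q''_s)$. Applying the Itô formula to $e^{\beta A_s}\bar Y_s^2$ between $0$ and $T$ (after the usual localization by the stopping times $S_n$ and the martingale checks as in \eqref{provamartingale}--\eqref{provamartingaledue}), neglecting $\bar Y_0^2\ge 0$ and the jump term $\sum e^{\beta A_s}\Delta\bar K_s^2\ge 0$, and using the Skorohod condition exactly as in Proposition \ref{prop:unique_simple} to get $\int_0^T\bar Y_{s^-}d\bar K_s\le 0$, I obtain
$$
\beta\|\bar Y\|^2_{L^{2,\beta}(A)}+\|\bar U\|^2_{L^{2,\beta}(p)}\le 2\eval\left[\int_0^Te^{\beta A_s}|\bar Y_s|\,|\bar f_s|\,dA_s\right].
$$
Then the Lipschitz estimate on $f$ gives $|\bar f_s|\le L_f|\bar P_s|+L_p(\int_E|\bar Q_s(e)|^2\phi_s(de))^{1/2}$, and with $2ab\le \alpha a^2+b^2/\alpha$ I arrive, as in \eqref{eq:contraction_relation}, at
$$
\Big(\beta-\tfrac{L_p^2}{\alpha}-\tfrac{L_f}{\sqrt\alpha}\Big)\|\bar Y\|^2_{L^{2,\beta}(A)}+\|\bar U\|^2_{L^{2,\beta}(p)}\le L_f\sqrt\alpha\,\|\bar P\|^2_{L^{2,\beta}(A)}+\alpha\|\bar Q\|^2_{L^{2,\beta}(p)}.
$$
Since $\beta>L_p^2+2L_f$, one can pick $\alpha\in(0,1)$ with $\beta>L_p^2/\alpha+2L_f/\sqrt\alpha$, so the coefficient of $\|\bar Y\|^2_{L^{2,\beta}(A)}$ is at least $L_f/\sqrt\alpha$, and then on the equivalent norm $\|(Y,U)\|^2=\tfrac{L_f}{\sqrt\alpha}\|Y\|^2_{L^{2,\beta}(A)}+\|U\|^2_{L^{2,\beta}(p)}$ the map $\Gamma$ is an $\alpha$-contraction. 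Completeness of $\mathbb{L}^\beta_{\mathbb{G}}$ and the Banach fixed point theorem yield a unique fixed point $(Y,U)$, and $(Y,U,K)$ with the $K$ attached by $\Gamma$ solves \eqref{eq:system_Y_only_MPP}; uniqueness of the full quadruple follows since $K$ is determined by $(Y,U)$ through the equation.

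The one genuinely new point compared with the earlier propositions — and the step I would be most careful about — is verifying that, when the given generator is only $\mathbb{G}$-progressive, the corresponding solution components stay in the $\mathbb{G}$-versions of the spaces: the Snell envelope must be taken with respect to $\mathbb{F}=\mathbb{G}$, the Doob--Meyer decomposition produces a $\mathbb{G}$-predictable increasing $K$, and the martingale representation theorem for the MPP (from \cite{jacod1975multivariate}) gives a $\mathbb{G}$-predictable $U$ with no Brownian term. This is exactly the specialization already indicated after Proposition \ref{prop:representation}, so no new difficulty arises in principle, but it is where the argument of Proposition \ref{prop:given_generators_case} has to be re-read with $Z\equiv 0$ and all objects adapted to $\mathbb{G}$; the a priori bound $\eval[\sup_t e^{\beta A_t}Y_t^2]<\infty$ and the localization argument bounding $\|Y\|_{L^{2,\beta}(A)}$, $\|U\|_{L^{2,\beta}(p)}$ go through verbatim once the $g$- and $Z$-terms are dropped.
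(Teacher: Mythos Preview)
Your proposal is correct and follows essentially the same approach as the paper's proof, which is merely a short sketch saying to repeat the argument of Theorem \ref{teo:both_processes} after first redoing Proposition \ref{prop:given_generators_case} in the $\mathbb{G}$-setting with $g\equiv 0$. Your write-up simply makes explicit the steps the paper leaves implicit (the $\mathbb{G}$-adaptedness of the Snell envelope, Doob--Meyer decomposition and martingale representation, and the contraction estimate obtained from \eqref{eq:contraction_relation} by dropping the Brownian terms).
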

\begin{proof}
    This is proven exactly as the case with also a Brownian motion. First, we show as in \ref{prop:given_generators_case}, the solution lies in $L^{2,\beta}(A)\times L^{2,\beta}(p)\times\mathcal{I}^2$ and, using Ito's formula, that it is unique. Next we build a contraction on this space, and obtain existence and uniqueness when the generator depends on $(Y,U)$.
\end{proof}
\begin{rem}
    A similar result does not hold in general in $L^1$. Counter examples are given in \cite{confortola2014backward}, where  additional hypotheses are then added to obtain an existence and uniqueness result. We also refer to \cite{confortola16LP} where the case $L^p$ is analysed.
\end{rem}

    \appendix
    \section{Some remarks on the Snell envelope theory}
    \label{app:snell_env}
    The Snell envelope theory has been treated in various works. \cite{el1981aspects}
    considers the case for a positive process without any restrictions on the filtration, obtaining general results.
    For a bit less general results, but still enough for our work, \cite{karatzas1998methods} develops
    the theory for non-negative càdlàg processes, while \cite{peskir2006optimal} treats the case where
    the process is càdlàg and left continuous over stopping times, and satisfies the condition
    \begin{equation}\label{eq:app_peskir_cond}\eval\left[\sup_t|\eta_t|\right]<\infty.\end{equation}
    The recent work \cite{kobylanski2012optimal} treats the subject in the framework of family of
    random variables indexed by stopping times, using quite general assumptions.
    In the following, let $(\Omega,\mathcal{F},\mathbb{P})$ be a probability space and let
    $\mathbb{F}=\left(\mathcal{F}_t\right)_{t\geq 0}$ be a filtration satisfying the usual conditions. Let $\eta$ be a cadlag process. Several properties that hold for positive processes can be shown under the condition \eqref{eq:app_peskir_cond}, as we will see in proposition \ref{res:ap_general_snell}.
     We  recall the following definition:
\begin{defi}
          An optional process $R$ of class [D] is said to be regular if $R_{t^-}=\leftidx{^p}R_t$ for any $ t<T $, where $\leftidx{^p}X$ indicates the predictable projection.
\end{defi}

    \begin{prop}
        \label{res:ap_general_snell}
        Let $\eta$ be a càdlàg process satisfying \eqref{eq:app_peskir_cond}. Define
        \begin{equation}
        \label{eq:app_snell_def}
        R_t=\essup\limits_{\tau \in \mathcal{T}_t}\econd{\eta_\tau}
        \end{equation}

        It holds that
        \begin{enumerate}[label={\roman*)},ref={\theprop.\roman*)}]
            \item \label{res:ap_snell_definition} $R_t$ is the Snell envelope of $\eta_t$. This means it is the smallest càdlàg supermartingale that dominates $\eta_t$, i.e. $R_t\geq \eta_t$ for all $t$ $ \mathbb{P} $-a.s.
            \item \label{res:ap_snell_optimcar} A stopping time $\tau^*$ is optimal in \eqref{eq:app_snell_def} (i.e. $R_t=\econd{\eta_{\tau^*}}$) if and only if one of the following conditions hold
            \begin{itemize}
                \item $
                R_{\tau^*}=\eta_{\tau^*} \text{ and } R_{s\wedge \tau^*} \text{is a }\mathbb{F}\text{-martingale}
                $
                \item $\eval[R_t]=\eval[\eta_\tau^*]$
            \end{itemize}
            \item \label{res:ap_snell_decomp} $R_t$ is of class [D], hence it admits decomposition
            $$
            R_t=M_t-K_t,
            $$
            where $M$ is a martingale, $K$ a predictable increasing process with $K_0=0$. $K$ can be decomposed as $K=K_t^c+K_t^d$, where $K^c$ indicates the continuous part and $K^d$ the discontinuous part. Moreover we have, a.s.

            \begin{gather*}
            \left\lbrace t:\Delta K_t>0\right\rbrace\subset\left\lbrace t: R_{t^-}=\eta_{t^-}\right\rbrace\\
        \text{or equivalently, }\quad   \Delta K_t=\Delta K_t\ind_{\{R(\eta)_{t^-}=\eta_{t^-} \}},
        \quad t\ge 0.
            \end{gather*}

            \item \label{res:ap_snell_regular} If the process $R_t$ is regular in the sense that $R_{t^-}=\leftidx{^p}R_t$, where $\leftidx{^p}R$ indicates the predictable projection, defining the stopping time
            $$
            D_t^*=\inf\lbrace s\geq t : R_s\neq M_s\rbrace,
            $$
            then $D_t^*$ is an optimal stopping time and it is in fact the largest optimal stopping time.
        \end{enumerate}
    \end{prop}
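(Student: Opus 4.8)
The plan is to reduce the four assertions to the classical Snell envelope theory for \emph{non-negative} càdlàg processes (as in \cite{karatzas1998methods} or \cite{el1981aspects}, or, for families indexed by stopping times, \cite{kobylanski2012optimal}) by an additive shift. Set $J=\sup_t|\eta_t|\in L^1$ (by \eqref{eq:app_peskir_cond}) and let $N$ be the càdlàg uniformly integrable martingale $N_t=\mathbb{E}[J\mid\mathcal{F}_t]$ closed by $J$. Since $\eta$ is adapted, $N_t\ge\mathbb{E}[|\eta_t|\mid\mathcal{F}_t]=|\eta_t|\ge-\eta_t$, so
\[
\hat\eta_t:=\eta_t+N_t
\]
is a non-negative càdlàg process with $0\le\hat\eta_t\le 2N_t$; consequently $\{\hat\eta_\sigma\}_{\sigma\le T}$ is dominated by the uniformly integrable family $\{2N_\sigma\}$, so $\hat\eta$ is of class $[D]$. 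Also $\mathbb{E}[\eta_\tau\mid\mathcal{F}_t]\le N_t<\infty$ for every $\tau\in\mathcal{T}_t$, so $R_t$ in \eqref{eq:app_snell_def} is well defined.

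The key identity is $\hat R_t=R_t+N_t$, where $\hat R_t:=\essup_{\tau\in\mathcal{T}_t}\mathbb{E}[\hat\eta_\tau\mid\mathcal{F}_t]$ is the Snell envelope of $\hat\eta$: for $\tau\in\mathcal{T}_t$, optional sampling of the uniformly integrable martingale $N$ gives $\mathbb{E}[N_\tau\mid\mathcal{F}_t]=N_t$, hence $\mathbb{E}[\hat\eta_\tau\mid\mathcal{F}_t]=\mathbb{E}[\eta_\tau\mid\mathcal{F}_t]+N_t$, and taking the essential supremum over $\tau$ gives the identity. Applying the classical theory to the non-negative class-$[D]$ process $\hat\eta$: $\hat R$ has a càdlàg modification and is the smallest càdlàg supermartingale dominating $\hat\eta$; it is of class $[D]$ (lying between $0$ and $2N$); it admits a Doob--Meyer decomposition $\hat R=\hat M-\hat K$ with $\hat M$ a uniformly integrable martingale, $\hat K$ predictable increasing, $\hat K_0=0$, $\mathbb{E}[\hat K_T]<\infty$, and $\Delta\hat K_t=\Delta\hat K_t\,\ind_{\{\hat R_{t^-}=\hat\eta_{t^-}\}}$; the optimality characterization of statement (ii) holds for the pair $(\hat\eta,\hat R)$; and when $\hat R$ is regular, $\hat D_t^*=\inf\{s\ge t:\hat R_s\ne\hat M_s\}$ is the largest optimal stopping time for $\hat\eta$.

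It remains to transfer these properties along $R=\hat R-N$, using that $N$ is a càdlàg martingale (hence regular, with predictable projection $N_{-}$). (i) $R$ is càdlàg, a supermartingale dominating $\eta$; and if $Q$ is any càdlàg supermartingale with $Q\ge\eta$, then $Q+N\ge\hat\eta$ is a càdlàg supermartingale, whence $Q+N\ge\hat R$ and $Q\ge R$, so $R$ is the Snell envelope of $\eta$. (iii) $R$ is of class $[D]$ as a difference of class-$[D]$ processes, $R=(\hat M-N)-\hat K$ is a Doob--Meyer decomposition, so by uniqueness $M=\hat M-N$, $K=\hat K$, $\mathbb{E}[K_T]<\infty$; since $\hat R_{t^-}=R_{t^-}+N_{t^-}$ and $\hat\eta_{t^-}=\eta_{t^-}+N_{t^-}$, one has $\{\hat R_{t^-}=\hat\eta_{t^-}\}=\{R_{t^-}=\eta_{t^-}\}$, which is the claimed support of $dK$. (ii) $R_{\tau^*}=\eta_{\tau^*}\iff\hat R_{\tau^*}=\hat\eta_{\tau^*}$; $R_{\cdot\wedge\tau^*}$ is a martingale $\iff\hat R_{\cdot\wedge\tau^*}$ is (add or subtract the stopped martingale $N_{\cdot\wedge\tau^*}$); and $\mathbb{E}[R_t]=\mathbb{E}[\eta_{\tau^*}]\iff\mathbb{E}[\hat R_t]=\mathbb{E}[\hat\eta_{\tau^*}]$ (both sides differ by $\mathbb{E}[N_t]=\mathbb{E}[N_{\tau^*}]=\mathbb{E}[J]$); so the characterization carries over verbatim. (iv) $R$ is regular $\iff\hat R$ is (the predictable projection is additive and $N$ is regular), and $\hat R_s-\hat M_s=R_s-M_s$, so $D_t^*=\hat D_t^*$, which by (ii) is the largest optimal stopping time for $\eta$.

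The genuine work lies in the classical non-negative case quoted in the second paragraph; the delicate points there are the precise support of $dK$ (which relies on Mertens' decomposition of the strong supermartingale $\hat R$ together with the characterization of the smallest supermartingale majorant) and the regularity statement in (iv) identifying the largest optimal stopping time. Since \eqref{eq:app_peskir_cond} already makes $\eta$ of class $[D]$, an alternative route is to appeal directly to \cite{kobylanski2012optimal}, whose results for admissible families of random variables indexed by stopping times are stated precisely under a class-$[D]$ assumption and yield (i)--(iv) for $\eta$ itself; the only mild point to check there is that $\mathcal{T}_t$ consists of stopping times bounded by the horizon $T$, so that the essential-supremum family is of class $[D]$ and the optional sampling of $N$ used above is legitimate, which holds in our setting.
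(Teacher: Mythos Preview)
Your proof is correct and follows essentially the same route as the paper: both arguments reduce to the classical non-negative Snell envelope theory via an additive shift by a uniformly integrable martingale, then transfer properties (i)--(iv) back through the shift. The only cosmetic difference is the choice of shift---the paper subtracts $N_t=\mathbb{E}[\inf_s\eta_s\mid\mathcal{F}_t]$ whereas you add $N_t=\mathbb{E}[\sup_s|\eta_s|\mid\mathcal{F}_t]$---and you spell out the transfer of each item in more detail than the paper, which only writes out (iv) explicitly.
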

    \begin{proof}
        Define
        $$
        I=\inf\limits_{t\in[0,T]}\eta_t \qquad \text{ and } \qquad N_t=\econd{I},
        $$

        and since $\eta_t-I\geq 0$ for all $t$, we have $\eta_t-N_t\geq 0$ for all $t$.
         $N_t$ is a uniformly integrable martingale thanks to \eqref{eq:app_peskir_cond}. Consider $\tilde{\eta}_t=\eta_t-N_t\geq 0$ and $\tilde{R}_t=R_t-N_t$. Notice that then
        $$
        \tilde{R}_t=R_t-N_t=\essup\limits_{\tau\in\mathcal{T}_t}\econd{\eta_\tau-N_\tau}=\essup\limits_{\tau\in\mathcal{T}_t}\econd{\tilde{\eta}_\tau},
        $$
        i.e. $\tilde{R}$ is the Snell envelope of the positive process $\tilde{\eta}$. $R$ inherits all the properties from $\tilde{R}$. Let us see why the fourth property holds, as the rest are obtained similarly.
        If $R_t$ is regular, so is $\tilde{R}_t$ because we are adding a uniformly integrable martingale, which is regular (all uniformly quasi-left-continuous integrable càdlàg martingales are regular, see \cite{sheng1998semimartingale} Def 5.49). The result then holds by \cite{el1981aspects} pag 140.
    \end{proof}

%
%
%
%

\printbibliography
\end{document}